\def\e{{\rm e}}
\def\cic{\mathbf}
\def\eps{\varepsilon}
\def\d{{\rm d}}
\def\R {\mathbb{R}}
\def\N {\mathbb{N}}
\def\C {{\mathbb C}}
\def\C {{\mathbb C}}
\def\D {{\mathcal D}}
\def\Nn {{\mathcal{N}}}
\def\Cr{{\mathcal{W}}}
\def\F {{\mathcal F}}
\def\S{{\mathbf S}}
\def\size{{\mathrm{size}}}
\def\T{{\mathbf{T}}}
\def\M {{\mathsf M}}
\def\Ws {{\mathsf W}}
\def\Fs {{\mathsf F}}
\def \l {\langle}
\def \r {\rangle}
\def \and{\qquad\text{and}\qquad}
\def \no#1#2#3 {{\bf #1} (#3), #2.}
\def \eds#1#2#3 {#1, #2, #3.}
\newtheorem{proposition}{Proposition}
\newtheorem{theorem}{Theorem}
\newtheorem{lemma}[theorem]{Lemma}
\theoremstyle{definition}
\newtheorem*{remark}{Remark}
\numberwithin{theorem}{section}
\begin{document}
\title{Lacunary Fourier and Walsh-Fourier series  near $L^1$}
\author[F.\ Di Plinio]{Francesco Di Plinio}
\address{INdAM - Cofund Marie Curie Fellow at Dipartimento di Matematica, \newline \indent Universit\`a degli Studi di Roma ``Tor Vergata'', \newline  \indent Via della Ricerca Scientifica,   00133 Roma,  Italy   \newline \indent \centerline{and}   \indent
 The Institute for Scientific Computing and Applied Mathematics,
Indiana University
\newline\indent
831 East Third Street, Bloomington, Indiana  47405, U.S.A. }
\email{diplinio@mat.uniroma2.it   }
\subjclass{42B20}
 \keywords{Carleson operator, multi-frequency Calder\'on-Zygmund decomposition, pointwise convergence}
\thanks{The    author is an INdAM - Cofund Marie Curie Fellow and is  partially
supported by the National Science Foundation under the grant
   NSF-DMS-1206438, and by the Research Fund of Indiana University.}

\begin{abstract} We  prove the following theorem: given a lacunary sequence of integers $\{n_j\}$, the subsequences $\Fs_{n_j} f$ and $ \Ws_{n_j} f$ of respectively the Fourier and  the Walsh-Fourier series of $f: \mathbb T\to \C$ converge  almost everywhere to $f$ whenever
\begin{equation}
 \label{logintro} \tag{1}
\int_{{\mathbb T}} |f(x)| \log\log(\e^\e +|f(x)|)\log\log\log\log\big( \e^{\e^{\e^\e}}+ |f(x)|\big)   \d x <\infty.
\end{equation}
Our integrability condition \eqref{logintro} is less stringent than the homologous assumption in  the   almost everywhere convergence theorems of Lie \cite{LIE} (Fourier case) and Do-Lacey \cite{DL} (Walsh-Fourier case), where  a triple-log term appears in place of the quadruple-log term of \eqref{logintro}.  Our proof of the Walsh-Fourier case is  self-contained and, in antithesis to \cite{DL}, avoids the use of Antonov's lemma \cite{ANT,SS}, relying instead on the {novel} weak-$L^p $ bound  for the lacunary Walsh-Carleson operator
$$
\big\|\sup_{n_j} |\Ws_{n_j} f|\big\|_{p,\infty} \leq K \log( \e + p') \|f\|_p \qquad \forall 1<p\leq 2.
$$
\end{abstract}
\maketitle
\baselineskip=24truept
{\footnotesize  \noindent To appear on 
    \emph{Collectanea Mathematica}.    \vskip0.5mm \noindent
    Available \emph{Onlinefirst} at \verb|http://link.springer.com/article/10.1007/s13348-013-0094-3|
    \vskip0.1mm \noindent
    Received 14 Apr  2013, accepted August 2013.

\section{Introduction and main results} \label{s1}
Let $\mathbb{T}: \R\backslash \mathbb{Z} $ be the one dimensional torus, identified with the interval $[0,1)$, and write 
$$
\l f, g \r= \int_{\mathbb{T}} f(x) \overline{g(x)} \, \d x. 
$$
For each $f \in L^1({\mathbb T})$, one can construct the Fourier series of $f$
$$
 \Fs_n f(x)= \sum_{k=-n}^n \l f, E_k \r E_k (x), \qquad x \in \mathbb{T}, \; n \in \N
$$
where $E_k(x)= \e^{2\pi i k x}$,
as well as the
 Walsh-Fourier series of $f$
$$
  \Ws_n f(x) = \sum_{k=0}^n \l f, W_k \r W_k (x),  \qquad x \in \mathbb{T}, \; n \in \N
$$ 
where $\{W_{n}:n\in \mathbb{N}\}$ is the orthonormal basis of $L^2(\mathbb{T})$  defined as $$
 W_n(x)= \prod_{k\in \N} \big(\mathrm{sign}\sin(2^{k }  2\pi x) \big)^{\eps_k(n)}, \qquad \eps_k(n):=\lfloor 2^{-k} n \rfloor \, \mathrm{mod} \, 2.
$$
 We are interested in  almost-everywhere convergence of $\Fs_{n} f$, $\Ws_{n} f$ along lacunary subsequences of integers $\{n_j: j \in \N \}$, that is, sequences of integers for which
$$\inf_{j \in \N } \frac{n_{j+1}}{n_j} = \theta >1;
$$
the constant $\theta$ is termed the \emph{lacunarity constant} of  the sequence $\{n_j\}$.
 
   The first main result of this note is Theorem \ref{mainthm}  below. In the statement, as well as in the remainder of the paper, we adopt the notations
$$
\log_k(t)=\underbrace{\log\big(\cdots \big(\log\big(\e_{k}  +t\big)\cdots \big)}_{k \textrm{ times}}, \quad \e_0=1, \; \e_k:=\e^{\e_{k-1}}, \qquad k=0,1,2,\ldots
$$  
The precise definition of the Orlicz spaces $L\log_2L\log_b L(\mathbb T)$, $b=3,4$, appearing  in the statement of the theorem and in the subsequent discussion is postponed to the end of the introduction.
\begin{theorem}
\label{mainthm}
Let $\cic{n}=\{n_j: j \in \N \}$ be a $\theta$-lacunary sequence of integers.  The lacunary Carleson (resp.\ Walsh-Carleson) maximal operators 
$$\Fs^\star_{\cic{n}}f(x):= \sup_{n \in \cic{n}} |\Fs_{n}f(x)|, \qquad \Ws^\star_{\cic{n}}f(x):= \sup_{n \in \cic{n}} |\Ws_{n}f(x)|$$
 map  $L\log_2 L\log_4 L(\mathbb T) $ into $L^{1,\infty}(\mathbb T) $, with   operator norms depending only on $\theta$. As a consequence,  almost everywhere convergence of the lacunary partial sums
$$\Fs_{n_j} f(x) \to f(x), \qquad   \Ws_{n_j} f(x) \to f(x), \qquad \textrm{\emph{a.e.} } x \in \mathbb{T}$$   holds for all $f \in L\log_2L\log_4 L(\mathbb{T})$. \end{theorem}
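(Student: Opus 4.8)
The plan is to establish the maximal inequality first and deduce almost-everywhere convergence from it by the standard closure argument. The maximal inequality rests on two ingredients: the weak-type $L^p$ bound with logarithmic growth of the operator norm, and a Yano-type extrapolation converting this one-parameter family of $L^p$ estimates into the single Orlicz endpoint estimate into $L^{1,\infty}$.

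The heart of the matter is the estimate $\|\Ws^\star_{\cic{n}}f\|_{p,\infty}\le K(\theta)\log(e+p')\|f\|_{p}$ for $1<p\le 2$ recorded in the excerpt, together with its Fourier analogue for $\Fs^\star_{\cic{n}}$. I would prove it by a multi-frequency Calderón--Zygmund decomposition: fix $\lambda>0$, run the Calderón--Zygmund stopping time for $f$ at height $\lambda$ to write $f=g+b$, control $\Ws^\star_{\cic{n}}g$ by the $L^2$ bound for $\Ws^\star_{\cic{n}}$, and --- on the complement of the enlarged exceptional cubes --- control $\Ws^\star_{\cic{n}}b$ by playing the mean-zero property of the bad functions against an estimate, uniform over the frequencies $\cic{n}$, for $\Ws_n$ applied to a normalized bump. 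The point is that, the sequence being $\theta$-lacunary, only $O_\theta(m)$ of the $n_j$ can ``resonate'' with a cube of sidelength $2^{-m}$; this logarithmic count is what replaces the power of $p'$ present in the strong-type constants by $\log(e+p')$. For the Fourier operator one runs the same scheme after a wave-packet decomposition, or invokes the relevant estimate from the time--frequency analysis of \cite{LIE}.

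Granting these bounds, I would extrapolate by a Yano-type argument. Decompose $f=f_0+\sum_{k\ge1}f_k$ with $f_0=f\ind_{\{|f|\le1\}}$, $f_k=f\ind_{\{2^{k-1}<|f|\le2^k\}}$, $m_k=|\{2^{k-1}<|f|\le2^k\}|$, and estimate $\Ws^\star_{\cic{n}}f_0$ by the $L^2$ bound. To bound $\lambda\,|\{\Ws^\star_{\cic{n}}f>\lambda\}|$, split the level as $\lambda=\tfrac12\lambda+\sum_k c_k\lambda$ with $\sum_k c_k\le\tfrac12$ and apply the weak-$L^{p_k}$ bound to each $f_k$ with an exponent $p_k$ close to $1$; using $\|f_k\|_{p_k}^{p_k}\le(2^k)^{p_k}m_k$ one gets, schematically,
\[
\lambda\,\bigl|\{\Ws^\star_{\cic{n}}f_k>c_k\lambda\}\bigr|\ \lesssim_\theta\ c_k^{-p_k}\bigl(\log(e+p_k')\bigr)^{p_k}\Bigl(\tfrac{2^k}{\lambda}\Bigr)^{p_k-1}2^k m_k .
\]
The art is to choose the exponents $\{p_k\}$ and the split $\{c_k\}$ --- that is, to perform the superposition over $k$ --- so efficiently that, after summing, taking the supremum over $\lambda$ (only $\lambda\gtrsim1$ matters, since $\lambda\,|\mathbb T|\to0$ otherwise), and identifying $k$ with $\log|f|$ on $f_k$, the resulting Orlicz class is exactly $L\log_2 L\log_4 L$: the $\log(e+p')$ growth forces the factor $\log_2 L$, and the superposition contributes only the single further iterated logarithm $\log_4 L$ --- whereas \cite{DL}, and \cite{LIE} in the Fourier case, effect this superposition through Antonov's lemma \cite{ANT,SS} and pay the coarser price $\log_3 L$. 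The dependence of the operator norms on $\theta$ alone is inherited from the weak-$L^p$ bound.

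Finally, almost-everywhere convergence follows in the usual way. The set of $f\in L\log_2 L\log_4 L$ for which $\Fs_{n_j}f\to f$ a.e.\ (resp.\ $\Ws_{n_j}f\to f$) is closed in that space, since for $g$ in it one has $\limsup_j|\Fs_{n_j}f-f|\le\Fs^\star_{\cic{n}}(f-g)+|f-g|$, of $L^{1,\infty}$ quasi-norm $\lesssim\|f-g\|_{L\log_2 L\log_4 L}$; and it contains the trigonometric (resp.\ Walsh) polynomials, for which $\Fs_n f=f$ (resp.\ $\Ws_n f=f$) for all large $n$. As $t\mapsto t\log_2 t\log_4 t$ satisfies the $\Delta_2$ condition, polynomials are dense in $L\log_2 L\log_4 L(\mathbb T)$, and the convergence extends to the whole space. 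The genuinely hard input is the weak-$L^p$ bound, taken here from the excerpt; granting it, the main obstacle is the extrapolation --- and within it, carrying out the superposition over scales efficiently enough to reach the sharp quadruple-logarithmic class instead of the cruder spaces that a naive summation produces.
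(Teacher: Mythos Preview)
Your proposal has a genuine gap in the Fourier case and a vague step in the extrapolation.

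\textbf{The Fourier gap.} You assume a weak-$L^p$ bound $\|\Fs^\star_{\cic{n}}f\|_{p,\infty}\lesssim_\theta\log(e+p')\|f\|_p$ analogous to Theorem~\ref{thmp}, to be proved by ``running the same scheme'' of multi-frequency Calder\'on--Zygmund decomposition. This is precisely what the paper says it \emph{cannot} do: in the Fourier setting the bad part of the multi-frequency CZ decomposition gives a nontrivial contribution that is not efficiently estimated (see the end of Section~\ref{s1}). Invoking \cite{LIE} does not help either, since \cite{LIE} proves \eqref{estimateff}, not a weak-$L^p$ bound with logarithmic constant. The paper's route is different: it takes \eqref{estimateww}--\eqref{estimateff} as input (from \cite{DL}, \cite{LIE}, or via Theorem~\ref{thmp} in the Walsh case), passes through the quasi-Banach space $\mathfrak{W}$ of \eqref{quasi} using Kalton's log-convexity of $L^{1,\infty}$, and then proves the embedding \eqref{embeddac}. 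This works uniformly for Fourier and Walsh because \eqref{estimateff} \emph{is} available.

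\textbf{The extrapolation.} Even in the Walsh case, where Theorem~\ref{thmp} is available, your Yano-type scheme is not the paper's argument, and the assertion that the superposition costs only $\log_4 L$ is exactly the nontrivial content you have to prove. The paper does \emph{not} use a dyadic decomposition $\{2^{k-1}<|f|\le 2^k\}$: it uses the triply-exponential levels $F_k=\{\e^{\e^{\e^k}}<|f|\le\e^{\e^{\e^{k+1}}}\}$, so that on $F_k$ one has $\log_2|f|\log_4|f|\sim \e^k\log_1 k$, and then the Kalton factor $\log_1(k)$ in the $\mathfrak{W}$-quasinorm cancels against the $\log_1(k)$ in the denominator. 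With your dyadic levels, $k\sim\log|f|$, and the summability requirement on your weights $c_k$ (or, equivalently, the Kalton penalty) would produce an extra $\log_2 L$ rather than $\log_4 L$. The improvement from $\log_3$ to $\log_4$ lives entirely in the choice of decomposition, and your sketch does not carry it out.
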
 We send the interested reader to the survey article \cite{K2} and references therein for additional context and perspective on problems related to the almost-everywhere convergence of Walsh and of Fourier series (in particular, along lacunary subsequences). Here, we mention that Theorem \ref{mainthm} without the $\log_4$ term, which is the object of a conjecture by Konyagin  \cite{K2},  would be sharp  in the following sense: for any nondecreasing  $\phi:[0,\infty) \to [0,\infty) $ with $\phi(0)=0$ and $$  
 \phi(t)=o\big(t \log_2 (t)\big), \qquad t \to \infty,
$$ and any lacunary sequence $\{n_j\}$
there exists a function $f$ in $\phi(L)$ with lacunary  Fourier series divergent everywhere, i.e.\ 
$$
\int_{{\mathbb T}} \phi(|f(x)|) \, \d x < \infty \qquad \mathrm{and}
 \qquad  \sup_{j}|\Fs_{n_j}f(x)|= \infty \quad\forall x \in {\mathbb T} .
$$ This is due to Konyagin \cite{K1} as well; a perusal of the proof extends the construction to the Walsh-Fourier case.

The recent articles \cite{DL} and \cite{LIE} have made significant progress towards a positive solution of Konyagin's conjecture, respectively in the Walsh and in the Fourier setting. Their respective main results can be summarized as follows: given any lacunary sequence of integers $\{n_j\}$, the subsequence $\Ws_{n_j} f$ [resp.\ $\Fs_{n_j} f$]  converges almost everywhere to $f$ for all $f \in L\log_2 L \log_3 L(\mathbb{T})$.

The bulk of \cite{DL} is devoted to the proof of the following restricted weak-type estimate for the lacunary Walsh-Carleson   maximal operator: for all $\theta$-lacunary sequences $\cic{n}$,
\begin{equation}
 \label{rearrw}
 \|\Ws_{\cic{n}}^\star f\|_{1,\infty} \leq K  |F|  \log_2 \left( \frac{ 1}{|F|} \right), \qquad \forall |f| \leq \cic{1}_F, \, F \subset \mathbb{T},
\end{equation}
where $K$ is a positive constant depending only on the lacunarity constant $\theta$ of $\cic{n}$.
A subsequent application of Antonov's lemma \cite{ANT} improves  \eqref{rearrw} into the (modified) weak-type estimate\begin{equation}
\label{estimateww}
\|\Ws_{\mathbf{n}}^\star f\|_{1,\infty} \leq K  \|f\|_{1} \log_2 \left(\frac{\|f\|_\infty}{\|f\|_1}\right)
\end{equation}
for all bounded functions $f: \mathbb T \to \mathbb C$. 
In the later article   \cite{LIE},       a direct (that is, without first proving a restricted weak-type estimate and then achieving weak-type via Antonov's lemma) proof of  the Fourier analogue of \eqref{estimateww}, namely
\begin{equation}
\label{estimateff}
\|\Fs_{\mathbf{n}}^\star f\|_{1,\infty} \leq K  \|f\|_{1} \log_2 \left(\frac{\|f\|_\infty}{\|f\|_1}\right).
\end{equation} 
is given.
Once estimates \eqref{estimateww}-\eqref{estimateff} are in place, the bounds
\begin{equation} \label{bdtriv}
\Ws_{\mathbf{n}}^\star , \Fs_{\mathbf{n}}^\star : \mathfrak{W}\to L^{1,\infty}(\mathbb T), 
\end{equation}
the  space $\mathfrak{W}$ being the quasi-Banach rearrangement invariant space with quasinorm 
\begin{equation} \label{quasi}
\|f\|_{\mathfrak{W}} := \inf\left\{\sum_{k \in \mathbb \N}\log_1(k) \|f_k\|_1  \log_2 \left(\textstyle\frac{\|f_k\|_\infty}{\|f_k\|_1}\right) : \begin{array}{l} f= \textstyle \sum_{k \in \mathbb \N} f_k, \\  \textstyle \sum_{{k \in \mathbb \N}}  |f_k| < \infty \textrm{ a.e.}\end{array}\right\},
\end{equation}
follow, as described in \cite{LIE}, from an exploitation of Kalton's log-convexity of $L^{1,\infty} (\mathbb T)$ \cite{KAL}. A standard density argument then implies almost everywhere convergence of $\Ws_{\mathbf{n}}^\star f , \Fs_{\mathbf{n}}^\star f $ for functions $f \in \mathfrak{W}$.
 The space $\mathfrak{W}$ is akin to the $QA$ space of \cite{ADR}, and    the embedding  \begin{equation} \label{evstemb}L\log_2L\log_3L(\mathbb{T}) \hookrightarrow \mathfrak{W}\end{equation}  follows along the lines of the theory developed in \cite{ADR} for $QA$. In view of the above discussion, coupling the   embedding \eqref{evstemb}   with \eqref{bdtriv}   immediately leads to the main results of respectively \cite{DL}  and  \cite{LIE}.\footnote{The authors of \cite{DL} employ a differently defined  quasi-Banach space, denoted $Q_D$, and derive $Q_D \to L^{1,\infty}(\mathbb T$) boundedness of $\Ws_\cic{n}^\star$  from \eqref{estimateww},  as well as  the embedding $L\log_2L\log_3L(\mathbb{T})  \hookrightarrow Q_D$, by appealing to the results of \cite{CM}, which generalize Arias de Reyna's work \cite{ADR}. However, it can be inferred from the discussion in \cite[Section 1]{CM} that the spaces $Q_D$ and $\mathfrak{W}$ coincide in this particular case.}
Our observation   is that, in fact, the strengthening of \eqref{evstemb}
\begin{equation} \label{embeddac}
L\log_2L\log_4L(\mathbb{T}) \hookrightarrow \mathfrak{W}
\end{equation}     
also holds; hence, assuming  \eqref{bdtriv} again (e.\ g.\ in the Walsh case)  
\begin{equation}
\label{estimatelog4}
\|\Ws_{\mathbf{n}}^\star f\|_{1,\infty} \leq K  \|f\|_{L \log_2 L \log_4 L(\mathbb T)},
\end{equation}
which in turn implies  the almost everywhere convergence part of Theorem \ref{mainthm}.
 The elementary proof of  \eqref{embeddac}  is given in Section \ref{mainpf}. We claim no originality for the methods; similar arguments have appeared, for instance, in \cite{ADR,SS,GMS,CGMS}. 
 
  The second main goal of this article is to give a novel, self-contained proof   of the inequality \eqref{estimateww}, and hence of the Walsh-Fourier case of the bound \eqref{bdtriv}. Our proof is both  simpler, and richer, than the one of \cite{DL}: in particular, in antithesis to \cite{DL}, we bypass the intermediate step \eqref{rearrw}, thus avoiding the need for  Antonov's lemma.  Instead,  we recover  \eqref{estimateww} as an immediate consequence of  the  weak-type bound of Theorem \ref{thmp} below, which is of independent interest. \begin{theorem}\label{thmp} Let $\cic{n}=\{n_j\}$ be a $\theta$-lacunary sequence. There is a positive constant $K$, depending only on the lacunarity constant $\theta$ of $\cic{n}$, such that, for all $1<p\leq2$
$$\|\Ws_{\mathbf{n}}^\star f\|_{p,\infty} \leq K \log_1(p') \|f\|_p.$$ 
\end{theorem}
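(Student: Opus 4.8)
The plan is to derive the bound from a linearization followed by a Calder\'on--Zygmund decomposition, the crucial point being a multi-frequency estimate for the bad part in which the lacunarity of $\cic{n}$ is converted into the gain $\log_1(p')$. First, partitioning $\cic{n}$ into $O_\theta(1)$ subsequences, each with lacunarity constant at least $2$, and using $\Ws^\star_{\cic{n}}\leq\sum_r\Ws^\star_{\cic{n}^{(r)}}$, I may assume $\theta\geq 2$, so that each dyadic block $[2^k,2^{k+1})$ contains at most one $n_j$. Then I linearize: it suffices to bound $\|\Ws_{N(\cdot)}f\|_{p,\infty}$ by $K\log_1(p')\|f\|_p$ uniformly over measurable $N\colon\mathbb T\to\cic{n}$ with finite range, the general case following by monotone convergence. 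Finally, writing $2^{k_j}\leq n_j<2^{k_j+1}$ and using that $\Ws_{2^k}$ is exactly the dyadic martingale conditional expectation $\mathbb E_k$, one has $|\Ws_{N(x)}f(x)|\leq M_{\mathrm d}f(x)+|\Lambda f(x)|$, where $M_{\mathrm d}$ is the dyadic maximal operator (hence $\|M_{\mathrm d}f\|_{p,\infty}\lesssim\|f\|_p$ uniformly in $p$) and $\Lambda f(x):=\big(\Ws_{N(x)}-\mathbb E_{k(x)}\big)f(x)$ with $2^{k(x)}\leq N(x)<2^{k(x)+1}$. Everything thus reduces to proving $\|\Lambda f\|_{p,\infty}\lesssim\log_1(p')\|f\|_p$.

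Using the Walsh wave packets $w_P$ attached to tiles $P=I_P\times\omega_P$ (dyadic, $|I_P||\omega_P|=1$), the operator $\Lambda$ admits the representation $\Lambda f=\sum_P\langle f,w_P\rangle\,w_P\,\ind_{E_P}$, where $E_P\subseteq I_P$ is the set of $x$ at which $N(x)\in\omega_P$ and the scale $|I_P|$ is active; here $w_P$ is supported on $I_P$ (the Walsh phase plane is local), and, by lacunarity, any dyadic frequency interval of length $2^\ell$ meets $\cic{n}$ in $\lesssim_\theta 1+\ell$ points. The first ingredient I would record is the unweighted endpoint $\|\Lambda f\|_2\lesssim\|f\|_2$, uniformly in $N$: this follows from the $L^2$ bound for the Walsh--Carleson operator, but it also comes directly out of the tile representation via a single-tree Bessel estimate plus a counting argument, which is elementary in the present setting because lacunarity forces the relevant collections of trees to be extremely sparse.

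Now fix $\alpha>0$ and run the dyadic CZ decomposition of $f$ at height $\alpha$, obtaining $f=g+\sum_Q b_Q$ over the maximal dyadic $Q$ with $\langle|f|\rangle_Q>\alpha$, where $\|g\|_\infty\lesssim\alpha$, $\|g\|_p\lesssim\|f\|_p$, $\sum_Q|Q|\lesssim\alpha^{-p}\|f\|_p^p$, $\operatorname{supp}b_Q\subseteq Q$ and $\int b_Q=0$; in particular $\mathbb E_{k_Q}b_Q=0$ with $|Q|=2^{-k_Q}$, so $\Ws_N b_Q=0$ for $N\leq 2^{k_Q}$. The good part is handled by the $L^2$ endpoint: $|\{|\Lambda g|>\alpha\}|\lesssim\alpha^{-2}\|g\|_2^2\lesssim\alpha^{-2}\,\alpha^{2-p}\|f\|_p^p=\alpha^{-p}\|f\|_p^p$. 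Setting $\mathcal O:=\bigcup_Q 2Q$, with $|\mathcal O|\lesssim\alpha^{-p}\|f\|_p^p$, it remains to estimate $|\{x\notin\mathcal O:|\Lambda b(x)|>\alpha\}|$, and this is the heart of the matter — the step I expect to be the main obstacle. Because $\Lambda$ is a maximal operator over a variable frequency, the pieces $\Lambda b_Q$ do not decay off $Q$ like the output of a Calder\'on--Zygmund operator; one circumvents this with a multi-frequency Calder\'on--Zygmund decomposition, equipping each $b_Q$ (possibly after a preliminary splitting of $f$ by size) with additional cancellation against the relevant lacunary frequencies $n_j>2^{k_Q}$ — of which, by the lacunary counting bound above, only $O_\theta\big(\log(1/|Q|)\big)$ are effectively seen — and tracking how these cancellations accumulate. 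Summing the per-cube estimates against this frequency count and then balancing the height $\alpha$ is precisely what turns a logarithm in the scale into the single logarithm in $p'$, yielding $|\{x\notin\mathcal O:|\Lambda b(x)|>\alpha\}|\lesssim(\log_1 p')^{p}\,\alpha^{-p}\|f\|_p^p$.

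Combining the two contributions gives $\|\Lambda f\|_{p,\infty}\lesssim\log_1(p')\|f\|_p$; undoing the linearization and the splitting of $\cic{n}$ then proves Theorem~\ref{thmp}. As explained in the introduction, one recovers from this the weak-type bound \eqref{estimateww} — with no recourse to Antonov's lemma — simply by optimizing the inequality over $1<p\leq2$ against $\|f\|_\infty$ and $\|f\|_1$.
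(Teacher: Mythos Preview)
Your outline shares the broad shape of the paper's argument---a Calder\'on--Zygmund step plus a multi-frequency treatment of the bad part---but the crucial bad-part estimate is asserted rather than proved, and the sketch you give for it does not quite work. Two concrete problems. First, the frequency count ``only $O_\theta(\log(1/|Q|))$ lacunary $n_j$ are effectively seen from $Q$'' is not right as stated: for a fixed stopping cube $Q$ with $|Q|=2^{-k_Q}$, \emph{every} $n_j>2^{k_Q}$ can contribute nontrivially to $\Lambda b_Q$ on $Q$, and there are infinitely many of these (any finite truncation of $N$ introduces a dependence you cannot allow). The lacunary counting bound you quote controls how many $n_j$ fall in a \emph{fixed} dyadic frequency window, but here the relevant window is unbounded above. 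Second, even granting a per-cube quantity of the form $C(\log(1/|Q|))$, you give no mechanism for converting a sum of such terms over the CZ cubes into a single $\log_1(p')$; the phrase ``balancing the height $\alpha$'' does not explain this, and in fact a naive sum $\sum_Q |Q|\log(1/|Q|)$ is not controlled by $\alpha^{-p}\|f\|_p^p$ times $\log_1(p')$.

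The paper's proof fills exactly this gap with two ingredients you do not mention. The first is an \emph{exponential} estimate $|\{|C_\S f|\gtrsim\lambda\}|\lesssim_\theta \exp(-\lambda/A)\,A^{-2}\|f\|_2^2$ valid whenever $\size_f(\S)\leq A$ (Proposition~\ref{thmexp} and its refinement \eqref{remarkexpest}); this is where lacunarity enters, via a crown-function bound (Lemma~\ref{fefftrick}) that upgrades the usual tree estimate to a forest estimate with no counting-function loss. The second is a projection lemma (Lemma~\ref{mflemma}): on each CZ interval $I$ one projects the bad part onto the span $H_I$ of the finitely many Walsh wave packets at scale $|I|$ that are $\ll$-below the active bitiles, and the John--Nirenberg inequality for lacunary Walsh series on $I$ (inequality \eqref{zyg}) gives $\|g_I\|_{L^2(I)}\lesssim_\theta p'$, hence a global replacement $g$ with $\|g\|_2^2\lesssim_\theta (p')^2|\{\M_p f>1\}|$ and identical wave-packet coefficients. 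The exponential estimate then absorbs the $(p')^2$ via the factor $\e^{-2\log_1(p')}$. Note in particular that the paper runs an $L^p$-CZ (stopping on $\M_p f>1$) rather than your $L^1$-CZ, so that no mean-zero cancellation of $b_Q$ is ever used; the ``bad part'' is handled by projection, not by cancellation. Your reduction to $\Lambda$ and your treatment of the good part are fine, but to complete the argument you need to replace the heuristic frequency count with the projection step and supply the exponential input that turns $p'$ into $\log_1(p')$.
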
  
  Note that weak and strong $L^p$ bounds for $\Ws_{\mathbf{n}}^\star$ with polynomial dependence on $p'$ of the operator norms follow by    standard (discrete) Littlewood-Paley theory; however, logarithmic dependence on $p'$ as in Theorem \ref{thmp} was previously unknown.
With this sharper estimate in hand, \eqref{estimateww}   is easily obtained via the chain of inequalities
\begin{align*}
\|\Ws_{\mathbf{n}}^\star f\|_{1,\infty}& \leq  \inf_{p >1}\|\Ws_{\mathbf{n}}^\star f\|_{p,\infty} \leq  \|\Ws_{\mathbf{n}}^\star f\|_{\bar p,\infty} \\ &\leq K \log_1(\bar p') \|  f\|_{\bar p }\leq K\log_1(\bar p')\|  f\|_{1 }\big(\textstyle \frac{\|f\|_\infty}{\|f\|_1}\big)^{\frac{1}{\bar p'}}
\end{align*}
 finally taking ${\bar p}'=\max\big\{2,\log\big(\frac{\|f\|_\infty}{\|f\|_1}\big)\big\}$.

A more detailed comparison of our approach to the proof in \cite{DL}, and a discussion on sharpness of Theorem \ref{thmp}, are given in the remarks Section \ref{sec5}.  Here, we just mention that one of the main tools of our proof  (appearing,  albeit in a different form, in \cite{DL} as well) is  a lacunary multifrequency Calder\'on-Zygmund decomposition argument (here, Lemma \ref{mflemma}), along the lines of \cite[Theorem 1.1]{NOT}. The structural\footnote{That is, modulo the usual technicalities due to the   spatial tails of the Fourier wave packets.} obstruction to  this scheme of proof when dealing with the Fourier case  is   that the  mean zero (with respect to multiple frequencies) part arising from the  multifrequency CZ decomposition, informally known as ``the bad part'', brings  nontrivial contribution, unlike the Walsh case. Despite the additional cancellation, we are unable to estimate this contribution efficiently as of now: overcoming these difficulties will be the object of future work.

A few words about notation. We will indicate by $\D$ the standard dyadic grid on $\R_+=[0,\infty)$ and by $\D_I=\{J \in \D: J \subseteq I\}$. Throughout, given a Young's function $\varphi$, we make use of the local Orlicz norms
$$
\|f\|_{L^{\varphi}(I)}:= \inf\Big\{\lambda>0 : \int_I \varphi\Big(\frac{|f(x)|}{\lambda}\Big) \, \frac{\d x}{|I|} \leq 1\Big\}, \qquad I \in \mathcal{D}.
$$
When $\varphi(t)=t^p$, $1\leq p \leq \infty$, we simply write  $L^p(I)$). With this notation, the usual $L^p$ Hardy-Littlewood dyadic maximal function is defined by 
$$
\M_p  f(x) = \sup_{ \D \ni I \ni x} \|f\|_{L^p(I)}. 
$$ 
With the notation $L\log_2L  \log_b L  (\mathbb{T})$, $b=3,4$ we indicate    the Orlicz (Banach) space defined by any  Young's function $\varphi_b$ with $t\log_2 (t) \log_b (t)  = \varphi_b(t)$ for $t>\e_b$. We observe for future use that $L\log_2L\log_b L(\mathbb{T})$ is a Banach space with unit ball 
$$B_b=
\Big\{f: \mathbb T \to \C,  |f|_{\mathcal{L}\log_2 \mathcal{L}\log_b\mathcal{L}}:=\textstyle\int_{\mathbb{T}} \varphi_b(|f (x)|) \, \d x \leq 1\Big\},
$$

Finally, the positive constants implied by the almost inequality signs appearing in the remainder of the paper are meant to be absolute unless otherwise specified: in that case, we will adopt the notation $\lesssim_a$ to indicate dependence of the implied constant on the parameter $a$. When we write $A \sim B$, we mean that $A \lesssim B$ and $B \lesssim A$ (and analogously for $\sim_a$).

The article is organized as follows. In the forthcoming Section \ref{mainpf}, we prove \eqref{embeddac}, which in turn implies Theorem \ref{mainthm}, via estimates \eqref{estimateww}, \eqref{estimateff}. In Section \ref{tf}, we review the discretization of the operator $\Ws_{\cic{n}}^\star$ into the model sum $C^{\cic{n}} $  and prove an auxiliary exponential estimate. This exponential estimate, together with a multi-frequency   projection argument exploiting the lacunary structure of the frequencies  (Lemma \ref{mflemma}),  
are the cornerstones of the proof of Theorem \ref{thmp}, given in Section \ref{s4}. Section \ref{sec5} contains additional remarks and open problems.  
\section{Proof of the embedding \eqref{embeddac}} \label{mainpf}   
To prove \eqref{embeddac}, in view of the definition \eqref{quasi} of the quasinorm on $\mathfrak{W}$,  it suffices to show that for any $f$ in the unit ball $B_4$ of $L\log_2L\log_4L (\mathbb{T})$
  there exists a sequence $\{f_k:k \in \mathbb \N\}$ with \begin{equation} \label{strim11} f=\sum_{k \in \mathbb \N}f_k, \quad \sum_{k \in \mathbb \N}|f_k| < \infty \;\textrm{ a.e.,}\quad    \sum_{k \in \mathbb \N}\log_1(k) \|f_k\|_1  \log_2 \left(\textstyle\frac{\|f_k\|_\infty}{\|f_k\|_1}\right) \lesssim 1.
\end{equation}
Given such an $f \in B_4$, we define $\{f_k:k \in \mathbb \N\}$ by   $$ f_k:= f \cic{1}_{F_k}, \qquad F_0 = \{|f|\leq \e^\e\}
,\; F_k=\Big\{\e^{\e^{\e^k}} < |f| \leq \e^{\e^{\e^{k+1}}}\Big\},\; k \geq 1. 
$$ The absolute convergence almost everywhere of the series is immediate, since each $f_k$ is bounded and the supports of the $|f_k|$ are pairwise disjoint. We use the elementary fact that $$
x \in F_k \implies \log_2(|f(x)|) \log_4(|f(x)|) \sim \e^{k} \log_1 k.$$ Consequently, adopting the shorthand $A_k:=|f_k|_{\mathcal{L}\log_2 \mathcal{L}\log_4\mathcal{L}}$
$$
\|f_k\|_1 \sim \frac{A_k}{\e^k \log_1(k)}, \qquad \frac{\|f_k\|_\infty}{\|f_k\|_1} \lesssim \frac{\e^{\e^{\e^{k+1}}}\e^k \log_1(k)}{A_k},
$$
whence
\begin{equation} \label{strim1}
\|f_k\|_1 \log_2\left( \frac{\|f_k\|_\infty}{\|f_k\|_1} \right) \lesssim \frac{A_k}{\e^k \log_1(k)} \log_2 \left( \frac{\e^{\e^{\e^{k+1}}}\e^k \log_1(k)}{A_k} \right). \end{equation}
We separate two regimes. In the regime $$
R_1=\left\{k: \frac{A_k}{\e^k \log_1(k)} \geq \frac{1}{\e^{\e^{\e^{k+1}}}} \right\},$$ the above inequality turns into
\begin{equation} \label{strim2}
\|f_k\|_1 \log_2\left( \frac{\|f_k\|_\infty}{\|f_k\|_1} \right)   \lesssim \frac{A_k}{\e^k \log_1(k)} \log_2 \left(  \big(\e^{\e^{\e^{k+1}}}\big)^2\right) \lesssim   \frac{A_k}{ \log_1(k)}. \end{equation} In the complementary regime $R_2$, using the trivial inequalities $$4\log_2(ab) \leq 2 \log_2(a) \log_2(b), \qquad \forall a,b>0$$ and $a \log \frac{1}{a} \leq \sqrt{a}$ for $|a| \leq 1$, 
 \eqref{strim1} becomes
\begin{align} \label{strim3}
\|f_k\|_1 \log_2\left( \frac{\|f_k\|_\infty}{\|f_k\|_1} \right) \ & \lesssim  \textstyle\frac{A_k}{\e^k \log_1(k)} \log_2 \left( \textstyle\frac{\e^k \log_1(k)}{A_k} \right) \log_2 \left(  {\e^{\e^{\e^{k+1}}}} \right) \nonumber \\ &\lesssim \left(\textstyle\frac{A_k}{\e^k \log_1(k)} \right)^{\frac12} \e^{k}  \lesssim \displaystyle \frac{\e^k}{ \e^{\e^{\e^{\frac k 2}}} } \lesssim \e^{-k}.    \end{align}
With  \eqref{strim2}-\eqref{strim3} in hand, we   easily get the last part of \eqref{strim11} as follows:
\begin{align*}
  \sum_{k \in \mathbb N} \log_1(k)  \|f_k\|_1 \log_2\left( \frac{\|f_k\|_\infty}{\|f_k\|_1} \right) &   \lesssim   \sum_{k \in R_1} A_k   + \sum_{k \in R_2} \e^{-k} \log_1(k)   \\ &\lesssim    |f |_{\mathcal{L}\log_2 \mathcal{L}\log_4\mathcal{L}} + 1 \lesssim 1.
\end{align*}
 The proof of the embedding \eqref{embeddac}  is thus completed. 
\section{Discretization and an exponential estimate} \label{tf}

 A \emph{bitile} $s=I_{s} \times \omega_{s} \in \D_{\mathbb T} \times \D $ is a dyadic rectangle with $|\omega_s|= 2|I_s|^{-1}$.
 We think of $s$ as the union of the two \emph{tiles} (dyadic rectangles in $\D_{\mathbb T} \times \D$ of area 1)
 $$
 s_1 = I_s \times \omega_{s_1}, \qquad  s_2 = I_s \times \omega_{s_2}
 $$ where  $\omega_{s_1},\omega_{s_2}$ refer respectively to the left and right dyadic children of $\omega_{s}$. The set of all bitiles will be denoted by $\mathbf{S}_{\mathbb{T}}$.
For each tile $t=I_t \times \omega_t$, the corresponding Walsh wave packet is defined by
$$
w_t(x) = \mathrm{Dil}^{2}_{|I_t|} \mathrm{Tr}_{\inf I_t} W_{n_t} (x)=|I_t|^{-1/2}W_{n_t}\Big(\frac{x-\inf I_t}{|I_t|}\Big), \qquad n_t:= |I_t| \inf \omega_t.
$$
Let $N: \mathbb{T} \to \R_+$ be a  measurable choice function and consider the model sum  
$$
C_{ {\S_{{\mathbb T}}}} f (x) =    \sum_{s \in\S_{{\mathbb T}}} \l f, w_{s_1}\r w_{s_1}(x) \cic{1}_{\omega_{s_2}} (N(x));
$$
we do not indicate the dependence on the choice function in our notation. This model sum is the discretization of the \emph{unrestricted} maximal operator $ \Ws^* f:=\sup_{n\in \N} | \Ws_{n } f|  .$ To obtain a faithful model sum for the maximal partial sum $\Ws^\star_{\cic{n}}$ restricted to the (lacunary) sequence $\cic{n}=\{n_j\}$, we restrict the range of the choice function $N$ to values in $\cic{n}$; this restricts the sum over the bitiles $\S_{\mathbb T}^{\cic{n}}:=\{s \in \S_{\mathbb{T}}:  \omega_{s_2} \cap \cic{n} \neq \emptyset \}$, whence  the equivalence \cite{ThWp}
$
\Ws^\star_{\cic{n}} f \sim C_{\S_{\mathbb T}^{\cic{n}}} f 
$. In the remainder of the article, we use the simpler notation $C^{\cic{n}}$ in place of $C_{\S_{\mathbb T}^{\cic{n}}}$  and further denote by
$$
C_\S   f (x) =    \sum_{s \in\S } \l f, w_{s_1}\r w_{s_1}(x) \cic{1}_{\omega_{s_2}} (N(x))
$$
the model sum corresponding to an arbitrary finite subcollection $\S \subset \S_{\mathbb T}^{\cic{n}} $.

 
The remainder of this section is devoted to the following  proposition, upon whose proof Theorem \ref{thmp} relies. 
\begin{proposition}\label{thmexp} Let $\cic{n}=\{n_j\}$ be a $\theta$-lacunary sequence.
 Then 
$$
\big|\{x \in \mathbb{T}: |C^{\cic{n}} f(x)|\gtrsim \lambda \}  \big| \lesssim_\theta \exp \left( -\frac{\lambda}{\|f\|_{\infty}} \right), \qquad \lambda >0,
$$
that is, $C^{\cic{n}} : L^\infty(\mathbb{T}) \to \mathrm{exp}(L^1) (\mathbb T).$
\end{proposition}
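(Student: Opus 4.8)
The plan is to establish the exponential distributional estimate for $C^{\cic{n}}$ on $L^\infty$ via a standard Calder\'on--Zygmund--type stopping-time decomposition of the bitile collection $\S_{\mathbb T}^{\cic{n}}$ into trees organized by density and size, combined with the lacunarity of $\cic{n}$ to control the tree-counting. First I would normalize $\|f\|_\infty = 1$ and, since it suffices by the equivalence $\Ws^\star_{\cic{n}} f \sim C^{\cic{n}} f$ to bound the model sum on arbitrary finite subcollections $\S \subset \S_{\mathbb T}^{\cic{n}}$ with constants uniform in $\S$, fix such a finite $\S$. The goal becomes: for each $\lambda > 0$, exhibit an exceptional set $E_\lambda$ with $|E_\lambda| \lesssim_\theta e^{-c\lambda}$ off of which $|C_\S f| \lesssim \lambda$.

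The key mechanism I would exploit is that $\S_{\mathbb T}^{\cic{n}}$ has a very restricted frequency structure: a bitile $s$ lies in $\S_{\mathbb T}^{\cic{n}}$ only if $\omega_{s_2}$ meets $\cic{n}$, and because $\cic{n}$ is $\theta$-lacunary, the number of intervals $\omega$ of a given dyadic length that can meet $\cic{n}$ and carry a nontrivial bitile above a fixed spatial interval is $O(\log_\theta)$ — more precisely, at each dyadic frequency scale only $O_\theta(1)$ choices of $\omega_{s_2}$ survive, and across all scales above $I_s$ the count grows only logarithmically. Concretely, I would run the usual tree-selection for time-frequency models: iteratively select maximal trees with density $\gtrsim 2^{-k}$ (where density of a tree $T$ measures $|I_T|^{-1}\int_{I_T} \cic{1}_{\{N \in \omega_T\}}$, or rather the relevant time-frequency mass), collect the tops, and use the $L^2$ orthogonality (Bessel-type / tree estimate) together with the lacunary frequency-counting to bound the total measure of the union of tops at level $k$ by $C_\theta 2^{k} \cdot (\text{something summable})$. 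The exponential gain then comes from summing the geometric contributions: each successive density level contributes a term like $2^{-k}$ to the pointwise bound outside an exceptional set whose measure is exponentially small in $\lambda$, so that requiring $|C_\S f(x)| > \lambda$ forces $x$ into roughly $\lambda$ nested bad sets each of which costs a factor $e^{-c}$.

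The single-tree estimate $\|C_T f\|_{L^\infty} \lesssim \|f\|_\infty$ (or its BMO/exponential refinement $\|C_T f\|_{\exp L^1} \lesssim \|f\|_\infty$) is elementary in the Walsh case because the wave packets $w_{s_1}$ are perfectly localized (no tails), so a tree reduces to a martingale-difference-type sum controlled by the $L^\infty$ norm; this is where the Walsh setting is genuinely easier than the Fourier one. I would then assemble: the exceptional set $E_\lambda$ is the union over $k \lesssim \lambda$ of the tops of trees of density $\sim 2^{-k}$ together with a maximal-function set $\{\M_2 f > C\}$ (which is empty for $\|f\|_\infty = 1$, $C$ large, so this is harmless), and on the complement the bound $|C_\S f| \lesssim \sum_k 2^{-k} \cdot (\text{log-factor}) \lesssim 1$ holds — but since we want the estimate at height $\lambda$ we instead keep $k$ running up to $\sim\lambda$ and pay $|E_\lambda| \lesssim_\theta \sum_{k \lesssim \lambda} 2^{-k}\,(\cdots)$, which after the lacunary tree count is $\lesssim_\theta e^{-c\lambda}$.

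The main obstacle I anticipate is the tree-counting step: showing that the total measure of the selected tree-tops at density level $2^{-k}$ is bounded by $C_\theta 2^{k}$ up to harmless factors, rather than by something with a polynomial-in-$k$ loss that would destroy the exponential. This is exactly where the $\theta$-lacunarity must be used quantitatively — via a multi-frequency Bessel inequality (the ``lacunary multifrequency Calder\'on--Zygmund'' / projection lemma advertised as Lemma \ref{mflemma}) that says projecting onto $\lesssim \log_\theta$-many frequency bands costs only a $\log_\theta$ factor in $L^2$, not a factor equal to the number of frequencies. Handling the interaction between the stopping-time scales and this multifrequency orthogonality cleanly, and making sure the logarithmic factors are absorbed into the $\lesssim_\theta$ and do not accumulate with $k$, is the delicate point; everything else is routine time-frequency bookkeeping.
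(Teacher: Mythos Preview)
Your high-level plan (reduce to a finite convex $\S$, decompose into trees, use a single-tree exponential bound, and invoke lacunarity to control the combinatorics) is the right shape, but two of your concrete choices are off and, as written, the argument does not close.

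First, the organizing parameter in the paper is \emph{size}, not density. Since $\|f\|_\infty=1$ one has $\size_f(\S)\le 1$ directly from \eqref{ubsize}, and the standard size lemma (Lemma~\ref{sizelemma}) already gives forests $\S_\sigma$ with $\size_f(\S_\sigma)\le\sigma$ and $\|\Nn_{\F_\sigma}\|_1\lesssim\sigma^{-2}\|f\|_2^2$. No density selection is needed, and indeed the paper stresses (Section~\ref{sec5}) that density is deliberately avoided; a density-based argument in this $L^\infty\to\exp(L)$ setting is awkward because there is no dual set $G$ to measure density against.

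Second, you have misidentified the lacunary input. The lemma that handles tree-counting here is Lemma~\ref{fefftrick}, which says that any forest in $\S_{\mathbb T}^{\cic n}$ admits a partition with $\|\Cr_{\F}\|_\infty\lesssim_\theta 1$ (at most $O_\theta(1)$ trees overlap in time \emph{and} crown simultaneously), at the cost of a harmless blow-up of $\|\Nn_\F\|_1$. Lemma~\ref{mflemma} is a multifrequency \emph{projection} lemma used only later, in the proof of Theorem~\ref{thmp}, to replace the bad part of a Calder\'on--Zygmund decomposition by an $L^2$ function; it plays no role in Proposition~\ref{thmexp}.

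Finally, the source of the exponential decay is not ``$\lambda$ nested bad sets each costing $e^{-c}$''. It is the John--Nirenberg inequality on a \emph{single} tree (Lemma~\ref{treelemma}): one sets the threshold on each tree of $\S_\sigma$ at height $\sim\lambda\sigma\log(\sigma^{-4})$, so that $|E_\T|\lesssim e^{-\lambda}\sigma^4|I_\T|$. Summing $\sigma^4\|\Nn_{\F_\sigma}\|_1\lesssim\sigma^2$ over $\sigma\in 2^{-\N}$ converges, giving $|E|\lesssim e^{-\lambda}$; and on $E^c$ the crown bound $\|\Cr_{\F_\sigma}\|_\infty\lesssim_\theta 1$ together with \eqref{crownin} yields $|C_\S f|\lesssim_\theta\lambda\sum_\sigma\sigma\log(\sigma^{-1})\lesssim_\theta\lambda$. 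Your sketch of ``keep $k$ running up to $\sim\lambda$'' would at best give polynomial decay in $\lambda$, not exponential.
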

The proof of Proposition \ref{thmexp} is given in Subsection \ref{ss32}; in the forthcoming Subsection \ref{ss31}, we recall the necessary tools of time-frequency analysis. 
\subsection{Analysis and combinatorics in the Walsh phase plane} \label{ss31} The material of this subsection is   essentially lifted from earlier work \cite{DD2} (see also \cite{ThWp}), with the exception of Lemma \ref{fefftrick}, which exploits the lacunary structure of the frequencies.

We begin by  recalling the well-known Fefferman order relation on either tiles or bitiles
\begin{equation}
\label{feff}
s \ll s' \iff I_s \subset I_{s'}\text{ and } \omega_{s} \supset \omega_{s'}.
\end{equation} A collection $\S\subset \S_{\mathbb{T}}$ is called \emph{convex} if
\begin{equation}
s, s'' \in \S, \,s' \in \S_{\mathbb{T}},\, s  \ll s' \ll s'' \implies s' \in  \S.
\end{equation}
We will use below that the collection of convex subsets is closed under finite intersection.

Given a set of bitiles $\S$, let  $\Pi_{\S}$ denote the orthogonal projection on the subspace of $L^2(\mathbb{T})$ spanned by
$\{w_{s_j}: s \in \S, j=1,2\}$.
  We set, for $f \in L^2(\mathbb{T})$,
$$
\size_{f} (\S) = \sup_{s \in \S}   \frac{\| \Pi_{\{s\}}f  \|_2}{\sqrt{|I_s|}}.
 $$
Note that$$
\size_{f} (\S) \sim \sup_{s \in \S }\sup_{j=1,2}   \frac{|\l f, w_{s_j} \r |}{\sqrt{|I_s|}}.
 $$
so that
 \begin{equation}
\label{ubsize}
\size_f(\S) \leq \sup_{s\in \S} \inf_{x \in I_s} \M_1 f(x).
\end{equation}
A collection of bitiles  $\T\subset \S$  is called a \emph{tree} with top bitile $s_\T  $ if $s\ll s_\T$ for all $s \in \T$. We use the notation $I_\T:=I_{s_\T}, \omega_{\T}=\omega_{s_\T}$. To characterize the contribution region of a tree, it is useful to introduce the notion of  the \emph{crown} of a tree:
$$
\mathsf{cr}(\T)= \bigcup_{s \in \T} \omega_{s_2}.
$$
We have the following exponential-type estimate for the model sum restricted to a tree of definite size. Note that $C_{\T} f$ is supported on $I_\T$.
\begin{lemma} \label{treelemma}Let $\T$ be a convex tree and 
 $\sigma = \size_f(\T)$. Then
$$
\big|\{x \in I_\T: |C_{\T} f(x)|\gtrsim \lambda \sigma \}  \big| \lesssim \e^{-\lambda} |I_T|, \qquad \forall \lambda >0.
$$
\end{lemma}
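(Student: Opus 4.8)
The plan is to reduce the tree estimate to a single-scale John–Nirenberg / Chang–Wilson–Wolff type argument by exploiting the martingale structure of the Walsh wave packets. First I would normalize so that $\sigma = \size_f(\T) = 1$, i.e.\ $|\l f, w_{s_j}\r| \lesssim \sqrt{|I_s|}$ for every $s \in \T$ and $j = 1,2$. Since $C_\T f$ is supported on $I_\T$ and has a natural splitting according to the scales $|I_s|$, I would write $C_\T f = \sum_{k} \Delta_k$ where $\Delta_k$ collects the contributions of bitiles $s \in \T$ with $|I_s| = 2^{-k}|I_\T|$; the key structural point is that, because the frequency intervals $\omega_{s_2}$ of bitiles at a fixed scale in a tree are pairwise disjoint and the choice function $N(x)$ selects at most one of them at each $x$, the sum $\Delta_k(x)$ is really a \emph{single} term $\l f, w_{s_1}\r w_{s_1}(x)$ with $|w_{s_1}(x)| = |I_s|^{-1/2}$ on $I_s$. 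Hence $|\Delta_k(x)| \lesssim 1$ pointwise, and moreover $\Delta_k$ is measurable with respect to the dyadic $\sigma$-algebra at scale $2^{-k}|I_\T|$ (using the Walsh/martingale structure).

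The heart of the argument is then a conditional-expectation (martingale difference) estimate: I would show that, after the appropriate conditioning, the $\Delta_k$ behave like a bounded martingale difference sequence on $I_\T$. Concretely, one checks that $\mathbb{E}(\Delta_k \mid \mathcal{F}_{k-1}) = 0$ on the relevant sub-tree regions — this is where convexity of $\T$ enters, guaranteeing that the ``crown'' structure $\mathsf{cr}(\T)$ is compatible with the nested dyadic filtration and that no cancellation is spuriously destroyed by missing bitiles. Granting the martingale-difference property together with the uniform bound $\|\Delta_k\|_\infty \lesssim 1$, the Azuma–Hoeffding inequality (equivalently, the exponential form of the Chang–Wilson–Wolff inequality, or a direct Bernstein-type estimate for bounded martingale differences) yields
$$
\big|\{x \in I_\T : |C_\T f(x)| > \lambda\}\big| \lesssim \e^{-c\lambda^2 / S(x)^2}\,|I_\T|,
$$
where $S$ is the square function $S = (\sum_k |\Delta_k|^2)^{1/2}$. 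To finish I would bound $S$ in $L^2$: since the $w_{s_1}$ are orthonormal and $\size_f(\T) = 1$, $\|S\|_2^2 = \sum_{s} |\l f, w_{s_1}\r|^2 \lesssim \sum_s |I_s| \lesssim |I_\T|$ by the tree/Carleson packing at a single top, so $S$ is bounded in $L^2(I_\T)$ on average; combined with the conditional bound this gives $|\{|C_\T f| > \lambda\}| \lesssim \e^{-c\lambda}|I_\T|$ after optimizing (or directly, since $\|\Delta_k\|_\infty \lesssim 1$ forces $S(x) \lesssim \sqrt{\#\{k\}}$, and one trades the Gaussian tail for the claimed exponential tail in $\lambda$).

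The main obstacle I anticipate is verifying the martingale-difference / conditional-expectation-zero property with the correct filtration in the presence of the restriction $\cic{1}_{\omega_{s_2}}(N(x))$: the choice function $N$ is an arbitrary measurable function, so one must argue that conditioning on the coarser dyadic algebra still kills the wave packet $w_{s_1}$ (which has mean zero at its own scale) \emph{before} $N$ is allowed to ``see'' the finer information — i.e.\ that $N$ can be treated as $\mathcal{F}_{k-1}$-measurable for the purposes of the $k$-th difference, or else handled by conditioning on an enlarged algebra. This is a standard but delicate point in Walsh time–frequency analysis; I would resolve it exactly as in \cite{DD2, ThWp}, by exploiting that $\omega_{s_2} \ni N(x)$ forces $x$ into a specific sub-interval of $I_s$ at the \emph{next} scale, so the indicator is measurable at the finer algebra and the wave packet's cancellation at the current scale is untouched. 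Everything else — the orthogonality bound, the packing estimate, convexity bookkeeping — is routine.
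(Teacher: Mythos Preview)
Your approach has a genuine gap in the final step. Even if one grants that the $\Delta_k$ form a martingale difference sequence with $\|\Delta_k\|_\infty \lesssim 1$, Azuma--Hoeffding (or Chang--Wilson--Wolff) yields only
\[
\big|\{x\in I_\T: |C_\T f(x)|>\lambda\}\big| \lesssim \exp\big(-c\lambda^2/K\big)\,|I_\T|,
\]
where $K$ is the number of scales contributing at the given point. A tree can be arbitrarily deep, so $K$ is not bounded, and this estimate is not uniform. Your proposed fixes do not close the gap: the $L^2$ bound $\|S\|_{L^2(I_\T)}^2\lesssim |I_\T|$ combined with Chebyshev on $\{S>t\}$ leaves a polynomial tail $|I_\T|/t^2$ which swamps the exponential; and the observation $S(x)\lesssim\sqrt{K}$ simply restates the problem. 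Bounded increments alone \emph{cannot} give a depth-independent exponential tail --- think of a sum of $K$ Rademacher functions, whose tail is $\exp(-c\lambda^2/K)$ and no better. Separately, the martingale-difference claim itself is suspect: the factor $\cic{1}_{\omega_{s_2}}(N(x))$ with $N$ arbitrary measurable destroys the mean-zero property of $w_{s_1}$ over $I_s$ in general, and the assertion that ``$\omega_{s_2}\ni N(x)$ forces $x$ into a specific sub-interval of $I_s$'' is not correct in the Walsh phase plane.

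The paper's proof is short and bypasses both issues. One first notes $C_\T f = C_\T(\Pi_\T f)$ and then uses two ingredients: the operator bound $\|C_\T g\|_{\mathrm{BMO}(\mathbb T)}\lesssim \|g\|_\infty$ (after modulating to the top frequency, $C_\T$ is a stopped dyadic partial-sum operator), and the pointwise bound $\|\Pi_\T f\|_\infty \leq \size_f(\T)$ (see \cite{DD2}). Together these give $\|C_\T f\|_{\mathrm{BMO}}\lesssim\sigma$, and John--Nirenberg delivers the exponential tail uniformly in the tree. The ingredient you are missing is precisely the second one: it is not merely the \emph{increments} that are bounded by $\sigma$, but the projection $\Pi_\T f$ itself is bounded in $L^\infty$ by $\sigma$. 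This is what makes the BMO norm --- and hence the tail estimate --- independent of the depth of the tree.
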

\begin{proof} It is obvious that $C_\T f= C_\T \Pi_\T f$, hence the lemma follows from the bound $$
\|C_\T (\Pi_\T f)\|_{\mathrm{BMO}(\mathbb T)} \lesssim \|\Pi_\T f\|_{\infty} \leq \size_f(\T)
$$
 and the John-Nirenberg inequality. For details on the second inequality see (for instance) \cite{DD2}.
\end{proof}
A finite convex collection of bitiles $\S$ is called a \emph{forest}  if $\S$ can be partitioned into (pairwise disjoint) convex trees $\{\T: \T \in \F\}$. It may be that a given $\S$ may admit many such partitions $\F$. The \emph{counting} function and  the \emph{crown} function of the forest $\S$ with respect to the partition $\F$ are respectively defined as $$
\Nn_\F (x) = \sum_{\T \in \F } \cic{1}_{I_\T} (x), \qquad \Cr_\F(x)=\sum_{\T \in \F } \cic{1}_{I_\T} (x)\cic{1}_{\mathsf{cr}(\T)} (N(x)) 
$$
For a tree $\T$, $\mathrm{supp} C_\T f \subset I_\T \cap N^{-1}(\mathsf{cr}(\T))$, and as a consequence, for a forest $\S$ with partition $\F$, one has the pointwise inequality
\begin{equation}
\label{crownin}
|C_\S f(x)| \leq \Cr_{\F} (x) \max_{\T \in \F} |C_\T f(x)|.
\end{equation}
 The lemma below can be used to decompose any convex collection of bitiles into forests of definite size, keeping the  the $L^1$ norm of the counting functions under control. See \cite{DD2} for a proof.
 \begin{lemma} \label{sizelemma} Let $\S $ be a finite convex collection of bitiles  with $\size_f(\S) \leq A$.
We can decompose $\S= \bigcup\{\S_{\sigma} :  {\sigma \in 2^{-\mathbb N} } \}$, with each $\S_{\sigma }$  a forest  such that\begin{align} \label{sizelemma1} &\size_f(\S_{\sigma }) \leq  A\sigma, \\ \label{sizelemma2} &\|\Nn_{\F_{\sigma }}\|_{1} \lesssim \sigma^{-2} A^{-2}\|f\|_{2}^2, \end{align}
for some partition $\F_\sigma$.
 \end{lemma}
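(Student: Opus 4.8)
The plan is to prove the statement by the standard greedy tree‑selection (``size decomposition'') algorithm of time‑frequency analysis, organised by dyadic size levels, exactly as in \cite{DD2}. Since $\size_f$ is monotone under inclusion of collections, we may assume $0<\size_f(\{s\})\le A$ for every $s\in\S$ (the bitiles with $\size_f(\{s\})=0$ are irrelevant both for the model sum and for the two estimates). I would then process the levels $\sigma=1,\tfrac12,\tfrac14,\dots$ in \emph{decreasing} order. At level $\sigma$, starting from the current residual collection $\S'$ ($\S'=\S$ initially, and in general $\S'$ equals $\S$ minus everything extracted at higher levels), repeat the following step as long as some bitile of $\S'$ has $\size_f(\{s\})>A\sigma/2$: pick among those a bitile $s_\T$ with $|I_{s_\T}|$ maximal (ties broken by a fixed total order on bitiles), set $\T:=\{s\in\S':s\ll s_\T\}$, place $\T$ into the family $\F_\sigma$, and replace $\S'$ by $\S'\setminus\T$. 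When no such bitile remains, set $\S_\sigma:=\bigcup_{\T\in\F_\sigma}\T$ and move on to the level $\sigma/2$. Finiteness of $\S$ and positivity of all sizes guarantee that the procedure terminates and assigns every bitile to exactly one $\S_\sigma$, so $\S=\bigcup_\sigma\S_\sigma$.

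Three structural facts then need to be checked, each short. First, each $\T\in\F_\sigma$ is a convex tree with top $s_\T$: writing $\T=\S'\cap\{s:s\ll s_\T\}$ exhibits it as an intersection of convex families, using transitivity of $\ll$ exactly as recorded after \eqref{feff}; likewise $\S'\setminus\T=\S'\cap\{s:s\not\ll s_\T\}$ stays convex, so all residual collections do. That $\S_\sigma$ itself is convex follows from the level ordering: if $s,s''$ lie in trees of $\F_\sigma$ and $s\ll s'\ll s''$ with $s'\in\S_{\mathbb T}$, then $s'\in\S$ by convexity of $\S$; $s'$ cannot have been extracted above level $\sigma$ (otherwise $s$, being $\ll s'\ll$ that top and still present, would have been swept into the same higher tree, contradicting $s\in\S_\sigma$); and within level $\sigma$ the first tree $\T_m$ whose top satisfies $s'\ll s_{\T_m}$ (which exists since the tree of $s''$ qualifies) must contain $s'$, so $s'\in\S_\sigma$. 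Second, \eqref{sizelemma1}: after level $2\sigma$ has been processed, no residual bitile has $\size_f(\{s\})>A\sigma$, hence every bitile assigned at level $\sigma$ or lower has $\size_f(\{s\})\le A\sigma$; in particular $\size_f(\S_\sigma)\le A\sigma$.

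The heart of the matter is \eqref{sizelemma2}, a Bessel‑inequality argument. Fix $\sigma$ and list the selected tops $\{s_\T:\T\in\F_\sigma\}$. I claim these are pairwise disjoint as phase‑space rectangles. If $\T$ is selected before $\T'$, then $|I_{s_{\T'}}|\le|I_{s_\T}|$ (the set of eligible bitiles only shrinks, so the maximal available time‑length cannot increase), which, since distinct trees have distinct tops, forbids $s_\T\ll s_{\T'}$; and $s_{\T'}\not\ll s_\T$ because $s_{\T'}$ survived the removal of $\T=\{s\in\S':s\ll s_\T\}$. Thus $s_\T$ and $s_{\T'}$ are $\ll$‑incomparable, and since two bitiles that intersect are automatically $\ll$‑comparable (dyadic intervals are nested or disjoint, and $|\omega_s|=2|I_s|^{-1}$), they are disjoint. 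Disjoint bitiles have mutually orthogonal Walsh wave packets, so the ranges of the projections $\Pi_{\{s_\T\}}$, $\T\in\F_\sigma$, are pairwise orthogonal; since each top obeys $\|\Pi_{\{s_\T\}}f\|_2>\tfrac{A\sigma}{2}\sqrt{|I_\T|}$ by the selection rule, the Pythagorean inequality gives
\begin{equation*}
\|\Nn_{\F_\sigma}\|_1=\sum_{\T\in\F_\sigma}|I_\T|<\frac{4}{A^2\sigma^2}\sum_{\T\in\F_\sigma}\|\Pi_{\{s_\T\}}f\|_2^2\le\frac{4}{A^2\sigma^2}\,\|f\|_2^2,
\end{equation*}
which is \eqref{sizelemma2}.

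I expect the only genuinely delicate point to be the bookkeeping in the selection: arranging the within‑level order of the tops so that they come out pairwise phase‑disjoint — this is what yields the crucial $(A\sigma)^{-2}$ gain in the Bessel step — while simultaneously keeping the residual collections, and each $\S_\sigma$, convex through all levels. This is precisely where defining $\size_f$ via the full projection $\Pi_{\{s\}}$ (rather than via the single wave packet $w_{s_1}$) pays off: disjointness of \emph{bitiles} alone produces the orthogonality, so one greedy pass per level, ordered by decreasing $|I_s|$, suffices, with no need to split the selection according to which child tile carries the mass. All remaining verifications are routine once this ordering is fixed; full details are in \cite{DD2}.
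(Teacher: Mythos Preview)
Your proposal is correct and follows precisely the standard greedy size-decomposition argument that the paper itself defers to \cite{DD2}; indeed, the paper gives no proof of its own, merely the reference. Your write-up in fact supplies more detail than either source makes explicit (notably the convexity bookkeeping for $\S_\sigma$ across levels and the observation that using $\Pi_{\{s\}}$ rather than $w_{s_1}$ alone in the definition of size is what makes a single greedy pass per level suffice for the Bessel step), and all of it checks out.
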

Our last lemma is specific of the lacunary case: in view of the fact that each bitile contains elements from the lacunary sequence $\cic{n}$, we have a bound on the crown function of a generic forest which only depends on the lacunarity constant $\theta$.
\begin{lemma}\label{fefftrick}For any  forest $\S \subset \S^{\cic{n}}_{\mathbb{T}}$ with partition $\F$, there is a partition $\F^\star$ with
$$
\|\Cr_{\F^\star}\|_\infty \lesssim_\theta 1, \qquad \|\Nn_{\F^\star }\|_{1} \lesssim_\theta \|\Nn_{\F}\|_{1}.
$$
\end{lemma}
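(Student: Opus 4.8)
Two remarks drive the argument. (a) Since $\omega_{s_2}$ is always the \emph{right} dyadic child of $\omega_s$, one has $\sup\omega_{s_2}\le 2\inf\omega_{s_2}$; hence any $\omega_{s_2}$ meeting $\cic n$ contains at most $C_\theta:=1+\lceil\log 2/\log\theta\rceil$ points of $\cic n$, and $|\omega_{s_2}|\le\nu$ for every $\nu\in\omega_{s_2}\cap\cic n$. Moreover, if $s$ lies in the ``$\Delta_1$-part'' of a tree $\T$, i.e.\ $\omega_\T\subseteq\omega_{s_1}$, then $\omega_s$ contains the lacunary point of $\omega_\T$ in its \emph{left} half and a lacunary point $\nu$ of $\omega_{s_2}$ in its \emph{right} half; two consecutive terms of a $\theta$-lacunary sequence being at distance $\gtrsim_\theta$ their size, this forces $|\omega_{s_2}|\sim_\theta\nu$, so once $N(x)=\nu$ is fixed there are only $O_\theta(1)$ admissible dyadic intervals $\omega_{s_2}$. (b) If $s,s'$ satisfy $I_s\cap I_{s'}\ne\emptyset$ and $\omega_{s_2}\cap\omega_{s'_2}\ne\emptyset$, then $s,s'$ are $\ll$-comparable (the intervals being then nested with $|I_s||\omega_{s_2}|=|I_{s'}||\omega_{s'_2}|=1$); in particular, for fixed $x$ and $\xi$, the set $\{s:x\in I_s,\ \xi\in\omega_{s_2}\}$ is a $\ll$-chain.

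\textbf{The re-partition.} I would first split each $\T\in\F$ as $\T=\Delta_1(\T)\sqcup\Delta_2(\T)$ with $\Delta_2(\T)=\{s_\T\}\cup\{s\in\T:\omega_\T\subseteq\omega_{s_2}\}$: on $\Delta_2(\T)$ the $\omega_{s_2}$ are nested through $\omega_\T$, so its crown is a single dyadic (right-child) interval $\Omega_\T\supseteq\omega_\T$, hence with $\le C_\theta$ lacunary points; on $\Delta_1(\T)$ the $\omega_{s_2}$ are pairwise disjoint right-children lying to the right of $\omega_\T$. Then $\F^\star$ is obtained by recombining, \emph{across the whole forest}, the $\Delta_2$-fragments among themselves and the $\Delta_1$-bitiles among themselves, greedily extracting maximal convex trees (repeatedly pick a $\ll$-maximal available bitile and attach all still-available bitiles below it, keeping the family convex). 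By (b) this collapses each coherent chain into one tree. For the counting function, each fragment sits inside some original $I_\T$ and the greedy merging does not increase $\sum|I_\cdot|$ relative to keeping fragments inside their original trees, so $\|\Nn_{\F^\star}\|_1\lesssim_\theta\|\Nn_\F\|_1$, with (a) entering to keep the $\Delta_1$ bookkeeping finite.

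\textbf{The crown bound.} Fix $x$ and $n=N(x)\in\cic n$. For the trees built from $\Delta_1$-bitiles: such a tree contributes only through a $\omega_{s_2}\ni n$, and by (a) there are only $O_\theta(1)$ admissible values of $\omega_{s_2}$; for each of them the relevant bitiles through $x$ form a $\ll$-chain by (b), hence lie in one tree, so these contribute $O_\theta(1)$. For the $\Delta_2$-trees one has $n\in\mathsf{cr}(P)=\Omega_P$, a single right-child interval containing $n$; the bitiles $s$ with $x\in I_s$ and $n\in\omega_{s_2}$ again chain up into one tree, while any further $\Delta_2$-tree with $x\in I_P$ and $n\in\Omega_P$ must realize $n$ through some $s$ with $I_s\subseteq I_P\ni x$ but $x\notin I_s$, which together with $\ll$-incomparability of distinct greedy tops and the size relation $|I_P|\ge|I_s|=1/|\omega_{s_2}|$ limits their number to $O_\theta(1)$. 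Summing, $\Cr_{\F^\star}(x)\lesssim_\theta 1$.

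\textbf{Where the difficulty lies.} The crux is the crown count: one must show that each pair $(x,N(x))$ is seen by only $O_\theta(1)$ trees of $\F^\star$, rather than by the $\sim\log N(x)$ trees permitted by a generic dyadic/Littlewood--Paley decomposition — and this rests squarely on the lacunary remark (a), especially the size pinning $|\omega_{s_2}|\sim_\theta N(x)$ for $\Delta_1$-type bitiles. This pulls against the $L^1$ control of $\Nn_{\F^\star}$: the recombination must be coarse enough to collapse the chains of (b) yet not so coarse as to swell the counting function, which is why the geometric structure of the $\Delta_1$ fragments and the size relations are needed simultaneously. The convexity checks in the greedy extraction (convex families are closed under intersection, and one may pass to convex hulls) are routine but must be carried along.
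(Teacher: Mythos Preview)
Your observations (a) and (b) are correct and capture the right lacunary phenomenon, but the crown-bound argument after your greedy recombination has a genuine gap.

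\textbf{The problem with the $\Delta_2$ part.} You assert that each recombined $\Delta_2$-tree $P$ has crown equal to ``a single dyadic (right-child) interval $\Omega_P$''. That is true of each fragment $\Delta_2(\T)$ \emph{before} recombination, but the greedy merge puts into $P$ bitiles coming from several different original trees $\T$, and a bitile $s\in P$ that was $\Delta_2$ relative to its original $\T$ may well satisfy $\omega_{s_P}\subseteq\omega_{s_1}$ relative to the new top $s_P$; its $\omega_{s_2}$ is then disjoint from $\omega_{(s_P)_2}$ and need not be nested with the other crown pieces. So $\mathsf{cr}(P)$ is not a single interval in general, and the subsequent sentence (``$\ll$-incomparability of distinct greedy tops and the size relation \ldots\ limits their number to $O_\theta(1)$'') is not a proof: for pairwise $\ll$-incomparable tops $s_{P_i}$ with $x\in I_{P_i}$ one only gets that the $\omega_{P_i}$ are pairwise disjoint, and nothing you wrote bounds how many of the $\mathsf{cr}(P_i)$ can contain a fixed $n$. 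Note also that keeping each $\Delta_2(\T)$ unrecombined would not help: then the crowns \emph{are} single right-child intervals, but there are $\sim\log n$ dyadic right-child intervals containing a given $n$, so the crown count is not $O_\theta(1)$ either. A similar issue affects the $\Delta_1$ count: knowing that the witness $s$ has $\omega_{s_2}$ in one of $O_\theta(1)$ positions does not by itself bound the number of trees $P$ with $x\in I_P$, since you do not have $x\in I_s$.

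\textbf{What the paper does instead.} The paper avoids the $\Delta_1/\Delta_2$ bookkeeping entirely. It first strips off $\S^0=\{s\in\S:n_1\in\omega_s\}$, which is trivially a forest with pairwise disjoint top intervals (its $\ll$-maximal bitiles do the job) and hence crown $\le 1$. On the remainder $\tilde\S$, it invokes the Fefferman trick: one shows that for every $s\in\tilde\S$ the number of $\ll$-maximal bitiles $s'\in\tilde\S$ with $I_s\subset I_{s'}$ and $\omega_{s'}\subset\omega_{s_2}$ is at most $M\lesssim_\theta 1$, because these $\omega_{s'}$ are pairwise disjoint and each carries a distinct $n_j\in\cic n$, so $\omega_{s_2}$ holds $\ge M$ lacunary frequencies; if $M>\log 2/\log\theta$ this forces $|\omega_{s_2}|\ge n_j$, hence $n_1\in\omega_s$, contradicting $s\notin\S^0$. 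The Fefferman trick then colours the trees with $\lesssim M$ colours so that within each colour the sets $I_\T\times\mathsf{cr}(\T)$ are pairwise disjoint, giving $\|\Cr\|_\infty\le 1$ per colour and $\|\Nn\|_1$ controlled by $\sum_{s\in\tilde\S^\star}|I_s|\le\|\Nn_\F\|_1$. This replaces your recombination/chain-collapse step with a single clean counting bound and a standard colouring; your size-pinning $|\omega_{s_2}|\sim_\theta n$ is morally the same lacunary input, but the paper applies it to the maximal bitiles rather than to arbitrary bitiles inside rebuilt trees, which is what makes the argument close.
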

\begin{proof} It suffices to show that $\S$ can be split into $\sim_\theta 1$ forests $\S^j$ with partitions $\F^j$, such that
 $$\|\Nn_{\F^j }\|_{1} \lesssim_\theta \|\Nn_{\F}\|_{1}, \qquad \{I_\T \times \mathsf{cr}(\T): \T\in \F^j  \} \,\textrm{ pairwise disjoint}.$$ We define $\S^0:= \{ s \in \S: n_1 \in \omega_{s}\}$. It is clear that $\S^0$ can be partitioned into convex trees $\T \in \F^0$ with pairwise disjoint $I_\T$ (take the $\ll$-maximal bitiles in $\S^0$ as tops). For each of these trees there exists a unique tree $\T'\in \F$ such that the top bitile $s_\T \in \T'$, whence $|I_\T|\leq |I_{\T'}|$; it then follows that $\|\Nn_{\F^0 }\|_{1} \leq\|\Nn_{\F}\|_{1}.$
Let now
  $\tilde\S= \S \backslash \S^0$ and ${\tilde\S}^\star$ be the $\ll$-maximal bitiles of $\tilde \S$. 
It should be apparent that $\sum_{s\in {\tilde\S}^\star } |I_s| \leq \|\Nn_\F\|_1$.
By the Fefferman trick (see for example Section 5 of \cite{D}), the initial claim will   follow if we show that for each $s \in \tilde \S$
$$
M:=\max_{s \in \tilde \S}   \# \big(T(s):=\{s'\in {\tilde\S}^\star: I_s \subset I_{s'}, \omega_{s'} \subset \omega_{s_2}   \}\big)    \lesssim_\theta 1
$$ 
Take $s \in \tilde \S$ which attains the maximum $M$. The collection  $T(s) $ is made of pairwise disjoint bitiles with $I_s \subset I_{s'}$, thus the intervals $\{\omega_{s}:  s \in T(s)  \}$ must be pairwise disjoint, and each contains a different $n_j \in \cic{n}$. It follows that $\omega_{s_2}$ contains at least $M$ different frequencies. Let $n_j$ and $n_k$ be the minimum and the maximum of these frequencies respectively. It must be $k\geq j+M$, whence $|\omega_{s_2}| \geq n_k-n_j  \geq (\theta^M-1)n_j $. If $M \geq \frac{\log 2}{\log \theta}$, we would have $|\omega_{s_1}| \geq   n_j$, $\inf \omega_{s_1} \leq n_j$, which in turn would imply  $n_1 \in \omega_{s}$, and $s$ would have been selected for $\S^0$. Thus $M \leq \frac{\log 2}{\log \theta} \lesssim_\theta 1 $ as claimed. 
\end{proof}


 \subsection{Proof of Proposition \ref{thmexp}} \label{ss32}
It suffices to  argue for $\lambda >\|f\|_\infty$ (the statement is otherwise trivial).   Furthermore, by a limiting argument, we may  argue for $C_\S$ in place of $C^{\cic{n}}$, with    $\S$ an arbitrary finite convex subcollection of $\S_{\mathbb T}^{\cic{n}}$, ensuring that the implied constants do not depend on $\S$.

    A consequence of   \eqref{ubsize} is that $\size_f(\S) \leq \|f\|_\infty$, and we can   apply the size decomposition Lemma \ref{sizelemma},  with $A=\|f\|_\infty$.
 We further apply Lemma \ref{fefftrick} to the resulting forests $\{\S_\sigma\}_{\sigma \in 2^{-\N}}$ with $\size_f(\S_\sigma) \leq \sigma\|f\|_\infty$, yielding partitions $\F_\sigma$ with 
\begin{equation} \label{thmexp1}
\|\Nn_{\F_\sigma}\|_1 \lesssim \sigma^{-2}\|f\|_\infty^{-2}\|f\|_2^2, \qquad \|\Cr_{\F_\sigma}\|_{\infty} \lesssim 1. 
\end{equation}
We will show that
\begin{equation} \label{excsetdef}
\{|C_\S f|\gtrsim \lambda \} \subset E:=\bigcup_{\sigma \in 2^{-\mathbb N}}   \bigcup_{\T \in \F_{\sigma}} E_\T,
\end{equation} 
where
$$
E_\T:= \big\{x \in I_T: |C_\T f|\gtrsim \lambda   \sigma \log \textstyle\big(\frac{1}{\sigma^{4}}\big) \big\}.
$$
Note that,  applying Lemma \ref{treelemma}, 
$$
|E_\T|=\big|\big\{x \in I_\T: |C_\T f|\gtrsim \textstyle \frac{\lambda}{\|f\|_\infty}   \ \log \textstyle\big(\frac{1}{\sigma^{4}}\big) \size_f(\T) \big\}\big| \lesssim \exp\big( \textstyle- \frac{\lambda}{\|f\|_\infty}\big)   \sigma^4    |I_\T| ,
$$
whence, in view of \eqref{thmexp1},
$$
|E|  \leq \exp\big({ \textstyle- \frac{\lambda}{\|f\|_\infty}}\big) \sum_{\sigma \in 2^{-\mathbb N}}\sigma^4 \|\Nn_{\F_{\sigma}}\|_{1} \lesssim \exp\big( \textstyle- \frac{\lambda}{\|f\|_\infty}\big) \|f\|_\infty^{-2}\|f\|_2^2.  $$
Therefore, assuming for a moment the inclusion   \eqref{excsetdef}, we have arrived at 
 \begin{equation} \label{forlater}
 \{|C_\S f|\gtrsim \lambda \} \lesssim \exp\big( \textstyle- \frac{\lambda}{\|f\|_\infty}\big) \|f\|_\infty^{-2}\|f\|_2^2;
\end{equation}
Proposition \ref{thmexp} simply follows from the obvious $\|f\|_\infty^{-1}\|f\|_2 \leq 1$.
The above mentioned inclusion is proved by observing that
$$
\sup_{x \in E^c} \sup_{\T \in \F_\sigma} |C_\T f(x)|\leq \lambda   \sigma \log \textstyle\big(\frac{1}{\sigma}\big),
$$
and therefore, making use of the triangle inequality, \eqref{crownin}, and \eqref{thmexp1},
\begin{align*}
|C_\S f(x)| &\leq \sum_{\sigma \in 2^{-\mathbb N}} |C_{\S_\sigma} f(x)| \leq \sum_{\sigma \in 2^{-\mathbb N}} \|\Cr_{\F_\sigma}\|_\infty \sup_{\T \in \F_\sigma} |C_\T f(x)| \\ &\lesssim_\theta  \lambda \sum_{\sigma \in 2^{-\mathbb N}} \sigma \log \textstyle\big(\frac{1}{\sigma} \big) \lesssim_\theta \lambda
\end{align*}
for $x \in E^c$,
which means that $E^c \subset \{C_\S f \lesssim \lambda\}.$ The proof of Proposition \ref{thmexp} is thus completed.
 
 \begin{remark} Perusing the proof of Proposition 
\ref{thmexp}, we realize that we have proved the following estimate: for a finite convex $\S \subset \S_{\mathbb{T}}^{\cic{n}}$, and any $A \geq \size_f(\S) $, \begin{equation} \label{remarkexpest}
\big|\{x \in \mathbb{T}: |C_{\S} f(x)|\gtrsim \lambda \}  \big| \lesssim_\theta \exp \left( \textstyle-\frac{\lambda}{A} \right)   \frac{\|f\|_2^2}{A^2},\qquad \lambda >0.
\end{equation}
\label{remarkexp}
This estimate will be used in the proof of Theorem \ref{thmp}.
\end{remark} 
 \section{Proof of Theorem \ref{thmp}} \label{s4} By the usual limiting argument,   replacing  $\S_{\mathbb{T}}^{\mathbf{n}} $ with an arbitrary finite convex subcollection $\S$,
Theorem \ref{thmp} is equivalent to the estimate
\begin{equation} \label{mainwte}
\big|\{ x \in \mathbb{T}:|C_{\S } f (x)| \gtrsim  \log_1(p') \lambda       \} \big| \lesssim_\theta \frac{  \|f\|^p_p}{\lambda^p}, \qquad \forall \lambda>0.
\end{equation} 
Furthermore, by scaling $f$, it suffices to work with $\lambda=1$.

First of all, note that the left-hand side of \eqref{mainwte}  is less than or equal to
\begin{equation} \label{s4pf1}
 \big|\{ x \in \mathbb{T}: \M_{p} f (x) >    1     \} \big| + \big|\{ x \in \mathbb{T}:|C_{\S } f (x)| \gtrsim  \log_1(p')  , \M_p   f(x) \leq    1   \} \big|
\end{equation}
and the first summand complies with the bound on the right-hand side of \eqref{mainwte} by the maximal theorem. Thus it suffices to estimate the second summand of \eqref{s4pf1}; note that
$$\M_p    f(x) \leq {1} \implies C_{\S } f (x) = C_{\S^{1} } f (x), \qquad \S^{1}=\left\{s \in \S: \inf_{I_s} \M_1 f \leq {1} \right\},
$$
and thus it suffices to estimate\begin{align} \label{s4pf2} 
\big|\{ x \in \mathbb{T}:|C_{{\S^{1}} } f (x)| \gtrsim   \log_1(p')      \} \big| &\leq \big|\{ x \in \mathbb{T}:|C_{{\S^{1}} } f_1 (x)| \gtrsim \log_1(p')      \} \big|  \nonumber \\&   +\big|\{ x \in \mathbb{T}:|C_{{\S^{1}} } f_2 (x)| \gtrsim \log_1(p')      \} \big|,
\end{align} 
where $f_1:= f \cic{1}_{\{\M_p f \leq {1}\}}$, $f_2:= f-f_1 $.
Our reduction has resulted into
\begin{equation}
 \label{s4pf3} \size_{f_{i}} ({\S^{1}}) \leq {1}, \, i=1,2, \qquad \|f_1\|_2^2 \leq \|f_1\|_p^{p} \|f_1\|^{2-p}_\infty \leq  \|f\|_p^p,\end{equation}
so that the first summand in \eqref{s4pf2} is bounded by invoking estimate \eqref{remarkexpest} with $A={1}$:
$$
\big|\{ |C_{{\S^{1}} } f_1 | \gtrsim \log_1(p')      \} \big| \leq \big|\{ |C_{{\S^{1}} } f_1 | \gtrsim    1    \} \big| \lesssim    {\|f_1\|_2^2}   \leq  \|f\|_p^p 
$$
We are only left with estimating the second summand in \eqref{s4pf2}. To do this, our plan is to apply \eqref{remarkexpest} again, once we have at hand the following multi-frequency projection lemma, which relies on the structure imposed on $\S_{\mathbb T}^{\cic{n}}$ by the lacunary sequence ${\cic{n}}$.  The first multi-frequency decomposition lemma of this sort  appeared in \cite{NOT} for the Fourier case, and modified Walsh versions of it have been successfully used in getting uniform estimates  \cite{OT} and endpoint bounds \cite{DD2}  for the quartile operator.   An argument along the same lines, but in the case of multiple \emph{lacunary} frequences, appears in \cite{DL}: our lemma is an $L^p$, $1<p<2$ reformulation of that argument.
\begin{lemma}\label{mflemma}
There is a function $g: \mathbb T \to \mathbb C$ with 
\begin{align} \label{laequality}
&\l f_2 , w_{s_1} \r  = \l g,w_{s_1} \r \qquad  \forall s \in \S^1,\\\label{lal2bd} & \|g\|_2^2 \lesssim (p')^2  |\{\M_p f> {1}\}|.
\end{align}
\end{lemma}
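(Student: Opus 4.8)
The plan is to construct $g$ explicitly as a suitable multi-frequency projection of $f_2$. Recall that $f_2 = f\cic{1}_{\{\M_p f > 1\}}$ is supported on the open set $U := \{\M_p f > 1\}$, which we decompose into its maximal dyadic constituents; write $U = \bigsqcup_{L \in \mathcal{L}} L$ where $\mathcal{L}$ is the collection of maximal dyadic intervals contained in $U$. For each such $L$, the bitiles $s \in \S^1$ with $I_s \supseteq L$ all have $|I_s| \gtrsim |L|$, and more importantly they each contain an element of the lacunary sequence $\cic{n}$ in $\omega_{s_1}$; the lacunary structure forces that, at each fixed scale $|I_s| = 2^{-k}|L|^{?}$, only boundedly many (in fact, controlled by $p'$ after summing over scales) distinct frequencies $n_t$ are relevant. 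The function $g$ will be defined on each $L$ by projecting $f_2\cic{1}_L$ onto the span of the relevant Walsh characters $W_{n_t}$ attached to $L$, and setting $g = 0$ off $U$. This is exactly the Walsh analogue of the Nazarov--Oberlin--Thiele construction, adapted as in \cite{DL} to multiple lacunary frequencies.

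The first step is to verify \eqref{laequality}. For $s \in \S^1$ with $I_s \subseteq L$ for some $L \in \mathcal{L}$ (which is the only case that matters, since $w_{s_1}$ is supported on $I_s$ and $f_2$ vanishes off $U$; bitiles with $I_s \supsetneq L$ or $I_s$ meeting several $L$'s need the standard argument that $\inf_{I_s}\M_1 f \le 1$ while $I_s$ contains a point of $U^c$, forcing such $s$ to contribute trivially or be excluded — this is the routine ``spatial localization'' bookkeeping), the wave packet $w_{s_1}$ restricted to $L$ is, up to normalization, a Walsh character at frequency $n_{s_1}$ which belongs to the span we project onto. Since $g$ on $L$ is the orthogonal projection of $f_2$ onto that span, $\langle f_2, w_{s_1}\rangle = \langle f_2\cic{1}_L, w_{s_1}\rangle = \langle g\cic{1}_L, w_{s_1}\rangle = \langle g, w_{s_1}\rangle$. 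One must be slightly careful to include in the projection space, for each $L$, all frequencies $n_{s_1}$ arising from $s \in \S^1$ with $I_s \subseteq L$, which by Lemma \ref{fefftrick}-type reasoning (each relevant bitile carries a point of $\cic{n}$, and the nesting of the $\omega$'s together with lacunarity limits multiplicities) is a set of cardinality $\lesssim_\theta$ (number of dyadic scales between $|L|$ and the minimal $|I_s|$); summed appropriately this is where a factor related to $p'$, rather than a bounded constant, will ultimately enter the $L^2$ bound.

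The second and main step is the bound \eqref{lal2bd}. By disjointness of the $L \in \mathcal{L}$ it suffices to show $\|g\cic{1}_L\|_2^2 \lesssim (p')^2 |L|$ for each $L$, since $\sum_L |L| = |U|$. On a fixed $L$, $g\cic{1}_L$ is the projection of $f_2\cic{1}_L$ onto the span of $\{W_{n} : n \in \Lambda_L\}$ with $\#\Lambda_L \lesssim_\theta M_L$, $M_L$ = number of relevant scales. Writing the projection via the orthonormal Walsh basis, $\|g\cic{1}_L\|_2^2 = \sum_{n \in \Lambda_L} |\langle f_2\cic{1}_L, W_n\rangle|^2$; each coefficient is controlled by $\|f_2\cic{1}_L\|_{L^1(L)} \le \|\M_1 f\|_{L^1(L)}$-type quantities, but more efficiently one uses that $s \in \S^1$ means $\inf_{I_s}\M_1 f \le 1$ hence the relevant local averages of $f$ at the scale of $I_s$ are $\lesssim 1$, giving $|\langle f_2, w_{s_1}\rangle| \lesssim \sqrt{|I_s|}$; translating back, each frequency's coefficient contributes $\lesssim |L|$, and one needs to control $\#\Lambda_L$. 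The point where $p'$ appears: $L$ is a maximal dyadic interval in $\{\M_p f > 1\}$, so on its dyadic parent the $L^p$-average of $f$ is $\le 1$; this constrains, via the $L^p \to L^1$ improvement $\|f\|_{L^1(I_s)} \le \|f\|_{L^p(I_s)} |I_s|^{1-1/p}$ and Bessel-type summation over the $M_L$ scales, the total to $\lesssim (p')^2 |L|$ — essentially the $(\sum_{k} k \cdot 2^{-\text{something}})^2$ or a geometric-series computation in which $\sum_k 1 \approx M_L$ gets replaced, after exploiting the $L^p$ smallness at each intermediate scale, by $\sum_k 2^{-\epsilon(p) k} \approx p'$. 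I expect \textbf{this counting-and-summation step — extracting exactly the factor $(p')^2$ and no worse from the number of lacunary frequencies per maximal interval — to be the crux}; everything else (support considerations, orthogonality, reduction to a single $L$) is standard Walsh time-frequency bookkeeping. The self-containedness claim of the paper suggests the authors carry this out directly rather than citing \cite{DL}, so the proof will spell out the frequency-counting lemma and the geometric summation in full.
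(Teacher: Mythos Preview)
Your overall architecture (maximal dyadic intervals $L$ inside $\{\M_p f>1\}$, define $g$ on each $L$ as a projection of $f_2\cic{1}_L$ onto a span of relevant Walsh packets, verify \eqref{laequality} by the projection property, then sum $\|g\cic{1}_L\|_2^2\lesssim (p')^2|L|$) matches the paper. But two points are off, one minor and one essential.

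First, the spatial geometry is reversed. For $s\in\S^1$ one has $\inf_{I_s}\M_1 f\le 1$, so $I_s$ contains a point outside $\{\M_p f>1\}$; consequently, whenever $I_s$ meets some $L\in\mathcal L$ it must be that $L\subsetneq I_s$, not $I_s\subseteq L$. The relevant packets are therefore $w_{s_1}$ with $I_s$ \emph{strictly larger} than $L$, and on $L$ each such $w_{s_1}$ coincides (up to a scalar) with a wave packet $w_t$ at the fixed scale $I_t=L$. The correct projection space $H_L$ is spanned by these $w_t$, one per relevant frequency interval at scale $L$.

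Second, and more seriously, your mechanism for producing the factor $(p')^2$ is wrong. You try to bound $\|g\cic{1}_L\|_2^2$ by (cardinality of $\Lambda_L$) $\times$ (square of a single coefficient) and then argue that $\#\Lambda_L$ is controlled by something like $p'$ via a geometric sum over scales. There is no such bound: the number of tiles $t$ at scale $L$ whose frequency interval contains a lacunary $n_j$ can be arbitrarily large (roughly $\log_\theta(n_{\max}|L|)$), and it has nothing to do with $p$. The paper does \emph{not} count frequencies. Instead it uses that the span $H_L$, being generated by Walsh packets carrying \emph{lacunary} frequencies, satisfies the reverse H\"older / John--Nirenberg estimate
\[
\|v\|_{L^{q}(L)} \lesssim q\,\|v\|_{\mathrm{BMO}(L)} \lesssim_\theta q\,\|v\|_{L^2(L)},\qquad v\in H_L,\;2<q<\infty,
\]
the second inequality being a known property of lacunary Walsh polynomials (cited from \cite{Sagh}). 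Then, since $\|f_2\|_{L^p(L)}\le 2$ by maximality of $L$, duality gives
\[
\|g\cic{1}_L\|_{L^2(L)}=\sup_{\substack{v\in H_L\\ \|v\|_{L^2(L)}=1}}|(f_2,v)_{L^2(L)}|\le \|f_2\|_{L^p(L)}\sup_{v}\|v\|_{L^{p'}(L)}\lesssim_\theta p',
\]
and squaring and summing over $L$ yields \eqref{lal2bd}. The $(p')^2$ is thus an \emph{analytic} factor coming from John--Nirenberg applied to a lacunary system, not a combinatorial count; your sketch misses this key estimate entirely and cannot be completed as written.
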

In view of \eqref{laequality} of Lemma \ref{mflemma}, we have that
$$
C_{{\S^{1}} } f_2= C_{{\S^{1}} } g, \qquad \size_{g} (\S^1) = \size_{f_2}(\S^1) \leq 1.
$$
Therefore, a further application of \eqref{remarkexpest} with $A={1}$, followed by \eqref{lal2bd},  yields
\begin{align*}
\big|\{ |C_{{\S^{1}} } f_2 | \gtrsim \log_1(p')     \} \big| & = \big|\{ |C_{{\S^{1}} } g| \gtrsim \log_1(p')     \} \big| \\ &\lesssim  \e^{-2\log_1 (p')}  {\|g\|_2^2}   \lesssim |\{\M_p f> {1}\}|,
\end{align*}
which once again has the correct measure by the maximal theorem. We have completed the proof of Theorem \ref{thmp}, up to showing Lemma \ref{mflemma}.
\vskip0.2cm\noindent \textit{Proof of Lemma \ref{mflemma}} \hskip0.1cm Let $I \in \cic{I}$  be the  maximal dyadic intervals of $\{\M_{p} f_1>1\}$;  for each $I\in \cic{I}$, let
$ t \in
T_I$ be the collection of all tiles  having $I_t=I$ and which are comparable under $\ll$ to some tile in $\{s_1: s \in \S^1\}$. These are obviously pairwise disjoint. 
The definition of $\S^1$ ensures that whenever $I_s \cap I$ for some $s \in \S_1$ and $I \in \cic{I}$, it must be that $I \subsetneq I_s$. It follows that  if $t \in T_I, s_1 \in \{s_1: s \in \S^1\}$ are related, then $t \ll s_1,s_2$. In particular, each $ t \in T_I$ must contain some lacunary frequency $n_j \in \cic{n}$; furthermore, by standard properties of Walsh wave packets, 
  $w_{s_1}$ (and $w_{s_2}$ as well, but we will not need this) is a scalar multiple of $w_t$ on $I$, and,  in particular, $
w_{s_1} \cic{1}_I $ belongs to $H_I$, the subspace of $L^2(I)$ spanned by $\{w_t: t \in T_I\}$. For functions $v \in  H_I$, one has the estimate
\begin{equation}
 \label{zyg}
 \|v\|_{L^{q}(I)}\lesssim  q \|v\|_{\mathrm{BMO}(I)}\lesssim_\theta q\|v\|_{L^2(I)}, \qquad 2<q<\infty;
\end{equation} the first bound is simply John-Nirenberg's inequality (and $\mathrm{BMO}(I)$ is the dyadic version), while the second is proved in \cite{Sagh}.
Since $\|f_2\|_{L^{p }(I)}=\|f \|_{L^{p }(I)} \leq 2$ by maximality of $I$ in $\{\M_p f>1\}$, it then follows that
$$
|(f_2 ,v)_{L^2( I)}|   \leq   \|f_2\|_{L^{p }(I)} \|v\|_{L^{p'}(I)} \lesssim_\theta   p'\|v\|_{L^2(I)}  \qquad \forall v \in H_I.
$$
Therefore $g_I $, the projection of $f_2 \cic{1}_I$ on $H_I$, satisfies  $\|g_I\|_{L^2(I)}  \lesssim p'$;  defining
$
g := \sum_{I \in \cic{I}} g_I, 
$
we see that
$$
\|g\|_{2}^2 = \sum_{I \in \cic{I}}|I| \|g_I\|_{L^2(I)}^2  \lesssim_\theta (p')^2\sum_{I \in \cic{I}} |I| = (p')^2  |\{\M_p f> {1}\}|,$$
that is, \eqref{lal2bd} holds.
Finally, in view of the above discussion, if $s_1 \in \{s_1: s \in \S^1\} $   
$$
\l f_2 , w_{s_1} \r = \sum_{I \in \cic{I}}\l f_2  , w_{s_1}\cic{1}_I  \r = \sum_{I \in \cic{I}}  \l f_2 \cic{1}_I  , c w_{t(s_1)} \r =   \sum_{I \in \cic{I}}  \l g_I  ,  w_{s_1} \r = \l g,w_{s_1} \r
$$ 
where $t(s_1)$ is the unique (if any) element $t$ of $T_I$ with $t \ll s_1$. This shows \eqref{laequality} and finishes the proof of the lemma.
  
\section{Remarks and complements} \label{sec5}
\subsection{A comparison with the argument in \cite{DL}} Therein,  estimate \eqref{estimateww} follows by upgrading the restricted weak-type version
 \eqref{rearrw},
  via Antonov's lemma \cite{ANT,SS} (which uses   the structure of the Walsh-Carleson kernel). In turn, \eqref{rearrw} is a consequence of  the restricted weak-type estimate
\begin{equation}
\l C^{\cic{n}}f, g \r \lesssim |F| \log_2\left(\frac{|G|}{|F|}\right)
\label{rwtest}
\end{equation}
for all sets $F,G\subset \mathbb{T}$, and all functions $|f|\leq \cic{1}_F$, $|g|\leq \cic{1}_{G'}$, with $G'$ being a suitably chosen  major subset of $G$. The proof of \eqref{rwtest} follows the usual Lacey-Thiele argument for boundedness of the unrestricted Carleson operator \cite{LT}; in particular, the dual quantity (density)
$$
\mathsf{dense}(\S) = \sup_{s \in  \S} \frac{|I_s \cap N^{-1}(\omega_s)\cap G |}{|I_s|}
$$
comes into play. For the unrestricted Carleson operator, the analogue of \eqref{rwtest} holds with a single logarithm; the improvement to double logarithm is possible thanks to a multifrequency  projection argument based on the same tools as Lemma \ref{mflemma} (in particular, an improvement over Hausdorff-Young inequality in the vein of \eqref{zyg}).

Our proof of Theorem \ref{mainthm} yields \eqref{estimateww} directly from the weak $L^p$ estimate
\begin{equation}
\label{finalp}B_{\cic{n}}(p):= \|C^{\cic{n}} f\|_{L^{p}(\mathbb{T}) \to L^{p,\infty}(\mathbb{T}) } \lesssim_\theta \log_1(p'), \qquad \forall \,1<p<2
\end{equation}  of Theorem \ref{thmp},   avoiding the need for extrapolation techniques. Moreover, our arguments do not employ density (which is also the key quantity in the  proof of the Fourier case \cite{LIE}), relying instead on the property that any collection of bitiles $\S \subset \S^{\cic{n}}_{\mathbb{T}}$ can be arranged into a forest $\F$ of trees with
\begin{equation} \label{suplac}
|C_\S f(x)| \lesssim_\theta \sup_{\T \in \F} |C_\T f(x)|,
\end{equation}
which exploits the lacunary structure, see Lemma \ref{fefftrick}. This property reflects the fact that the lacunary Carleson operator is essentially a supremum of (lacunarily) modulated Hilbert transforms acting on (essentially) pairwise disjoint regions of the time-frequency plane.     \subsection{Sharpness of Theorem \ref{thmp}} We conjecture that Theorem \ref{thmp}, summarized into \eqref{finalp}, is sharp in the following sense: for a generic  lacunary sequence,
$$
\limsup_{p \to 1^+}\frac{B_{\cic{n}}(p)}{\varphi(p')} =\infty \qquad \forall \varphi(t)= o(\log_1( t)), \; t \to \infty.
$$ 
We cannot quite prove this result; however, the weaker statement
$$
\limsup_{p \to 1^+}\frac{ B_{\cic{n}}(p)}{\varphi(p')} =\infty \qquad \forall \varphi(t)= o\left(\textstyle \frac{\log_1( t)}{\log_3(t)}\right), \; t \to \infty.
$$
must hold. If it were not so, an argument along the lines of the proof of Theorem \ref{mainthm} would contradict Konyagin's counterexample from \cite{K1}, that we have mentioned at the beginning of the paper. Similarly, proving that
$B_{\cic{n}}(p)\sim_\theta O(\log_1(p')/\log_3(p'))$ would allow  the removal of the 
quadruple-log term in Theorem \ref{mainthm}, thus yielding the sharp result. Our conjecture stems from deeming the  term $\log_3(p')$ as inconsequential, and  expresses the belief that  knowing the sharp weak $L^p$ constant would not suffice to prove the sharp analogue of Theorem \ref{mainthm}.
\subsection{Strong $L^1$ bounds} A further unresolved question concerns the largest Orlicz  space $X$ of functions $\mathbb T \to \C$ for which the bound
$$
\|\Ws^\star_{\cic{n}} f\|_{L^1(\mathbb{T})} \lesssim_\theta \|f\|_{X}
$$
holds. Since $\Ws^\star_{\cic{n}}$ is greater than each (discrete) $n_j$-modulated Hilbert transform, it follows that no Orlicz space $L_\varphi(\mathbb{T})$ with $$
\limsup_{t \to \infty}\frac{\varphi(t)}{t \log_1(t)} = 0
$$
embeds  into $X$. The (sharp, in terms of Orlicz norms) inclusion $L \log L(\mathbb{T}) \subset X$ is still unknown:
 the current best result \cite[(1.6) of Theorem 1.4]{DL} is that $L\log_1 L \log_2 L(\mathbb{T}) \subset X$. We can easily recover this result from Theorem \ref{thmp}: applying Marcienkiewicz interpolation, one turns the weak-type bound of Theorem \ref{thmp} into the strong bound
$$\|\Ws^\star_{\cic{n}}\|_{p \to p} \lesssim_\theta p' \log_1(p'), 
$$
which in turn implies $\Ws^\star_{\cic{n}}: L \log_1 L \log_2 L(\mathbb{T}) \to L^1(\mathbb{T})$, repeating the proof of the classical Yano extrapolation theorem. 

In relation to this, it is known that all sublinear translation invariant operators  of restricted weak type (1,1)  map $L \log_1 L  (\mathbb{T})$ into $L^1(\mathbb{T})$ (see for example \cite{Hag}). However, a result of Moon \cite{Moon} implies that  an operator of the form
$ 
Tf  = \sup_{n} |f * g_n | 
$ 
with each $g_n \in L^1(\mathbb{T})$, is of restricted weak type (1,1) if and only if it is of weak type (1,1). Since $\Ws^\star_{\cic{n}}$ is of this form, and it is not weak type (1,1), it cannot be restricted weak type (1,1) either. This suggests the need for direct methods in the search for a proof that $ \Ws^\star_{\cic{n}}$ is strong-type $L \log_1 L  (\mathbb{T}) \to L^1(\mathbb{T})$, possibly relying on \eqref{suplac}.

\subsection*{Acknowledgements}
The author wants to express his gratitude  to his Ph.\ D.\ thesis advisors Ciprian Demeter and Roger Temam for their hospitality during his April 2013 visit to the Institute of Scientific Computing and Applied Mathematics at Indiana University, where this article was finalized. The author also thanks    Victor Lie and Elena Prestini  for fruitful discussions on the subject of this paper and its presentation.

\bibliography{LacunaryFSDiPlinio}{}
\bibliographystyle{amsplain}
\end{document}